\newtheorem{theorem}{\sc Theorem}[section]
\newtheorem{proposition}[theorem]{\sc Proposition}
\newtheorem{lemma}[theorem]{\sc Lemma}
\newtheorem{corollary}[theorem]{\sc Corollary}
\theoremstyle{definition}
\newtheorem{definition}[theorem]{\sc Definition}
\newtheorem{definitions}[theorem]{\sc Definitions}
\newtheorem{example}[theorem]{\sc Example}
\theoremstyle{remark}
\newtheorem{remark}[theorem]{\sc Remark}
\newtheorem{claim}[theorem]{}
\newcommand\id{\mathrm{Id}}
\newcommand\M{\mathcal{M}}
\newcommand\Alg{\mathrm{Alg}}
\newcommand\Bialg{\mathrm{Bialg}}
\renewcommand\vec{\mathrm{Vec}_{\Bbbk }}
\newenvironment{invisible}{{\noindent\sc \underline{\color{blue}Invisible (To be hidden)}:\quad}\color{red}}{\medskip}
\begin{document}
\title{Heavily Separable Functors}
\author{Alessandro Ardizzoni}
\address{%
\parbox[b]{\linewidth}{University of Turin, Department of Mathematics ``G. Peano'', via
Carlo Alberto 10, I-10123 Torino, Italy}}
\email{alessandro.ardizzoni@unito.it}
\urladdr{sites.google.com/site/aleardizzonihome}
\author{Claudia Menini}
\address{University of Ferrara, Department of Mathematics, Via Machiavelli
35, Ferrara, I-44121, Italy}
\email{men@unife.it}
\urladdr{sites.google.com/a/unife.it/claudia-menini}
\subjclass[2010]{Primary 16H05; Secondary 18D10}
\thanks{This paper was written while both authors were members of the
"National Group for Algebraic and Geometric Structures and their
Applications" (GNSAGA-INdAM)}

\begin{abstract}
Prompted by an example related to the tensor algebra, we introduce and
investigate a stronger version of the notion of separable functor that we
call heavily separable. We test this notion on several functors
traditionally connected to the study of separability.
\end{abstract}

\keywords{Separable Functors, Separable extension, Monads, Monoidal Category}
\maketitle

\section*{Introduction}

Given a field $\Bbbk$, the functor $\mathbf{P}:\Bialg_\Bbbk\to \vec$, assigning to a $\Bbbk$%
-bialgebra $B$ the $\Bbbk$-vector space of its primitive elements, admits a
left adjoint $\mathbf{T}$, assigning to a vector space $V$ the tensor
algebra $\mathbf{T}V$ endowed with its canonical bialgebra structure such
that the elements in $V$ becomes primitive. By investigating the properties
of the adjunction $(\mathbf{T},\mathbf{P})$, together with its unit $%
\boldsymbol{\eta}$ and counit $\boldsymbol{\epsilon}$, we discovered that
there is a natural retraction $\boldsymbol{\gamma}:\mathbf{T}\mathbf{P}\to
\mathrm{Id}$ of $\boldsymbol{\eta}$, i.e. $\boldsymbol{\gamma}\circ
\boldsymbol{\eta}=\mathrm{Id}$, fulfilling the condition $\boldsymbol{\gamma}%
\boldsymbol{\gamma}=\boldsymbol{\gamma}\circ \mathbf{P}\boldsymbol{\epsilon}%
\mathbf{T}$. The existence of a natural retraction of the unit of an
adjunction is, by Rafael Theorem, equivalent to the fact that the left
adjoint is a separable functor. It is then natural to wonder if the above
extra condition on the retraction $\boldsymbol{\gamma}$ corresponds to a
stronger notion of separability. In the present paper, we show that an
affirmative answer to this question is given by what we call a heavily
separable (h-separable for short) functor and we investigate this notion in case of functors
usually connected to the study of separability.

Explicitly, in Section \ref{sec:1} we introduce the concept of h-separable functor and we recover classical results in the h-separable case such as their behaviour with respect to composition (Lemma \ref{lem:compheav}). In Section \ref{sec:2}, we obtain a Rafael type Theorem \ref{thm:Rafael}. As a consequence we characterize the h-separability of a left (respectively right) adjoint functor  either with respect to the forgetful functor from the Eilenberg-Moore category of the associated monad (resp. comonad) in Proposition \ref{pro:heavsepU} or by the existence of an augmentation (resp. grouplike morphism) of the associated monad (resp. comonad) in Corollary \ref{coro:hsepaug}. In Theorem \ref{thm:indu}, we prove that the induced functor attached to an $A$-coring is h-separable if and only if this coring has an invariant grouplike element.

Section \ref{sec:3} is devoted to the investigation of the h-separability of the induction functor $\varphi^*$ and of the restriction of scalars functors $\varphi_*$ attached to a ring homomorphism $\varphi:R\to S.$
In Proposition \ref{pro:inducfunhsep}, we prove that $\varphi^*$ is h-separable if and only if there is a ring homomorphism $E:S\to R$ such that $E\circ \varphi=\id$.
Characterizing whether $\varphi_*$ is h-separable (in this case we say that $S/R$ is h-separable) is more laborious.  In Proposition \ref{pro:S/R}, we prove that $S/R$ is h-separable if and only if it is endowed with what we call a $h$-separability idempotent, a stronger version of a separability idempotent. In Lemma \ref{lem:ringepim} we show that the ring epimorphisms (by this we mean epimorphisms in the category of rings) provide particular examples of h-separability. In Lemma \ref{lem:matrix} we show that the ring of  matrices is never h-separable over the base ring except in trivial cases. In the rest of the present section we investigate the particular case when $S$ is an $R$-algebra i.e. $\mathrm{Im}(\varphi)\subseteq Z(S).$ In Theorem \ref{thm:alg} we discover that, in this case, $S/R$ is h-separable if and only if $\varphi$ is a ring epimorphism. Moreover $S$ becomes commutative. As a consequence, in Proposition \ref{Pro:hsepoverfield} we show that a h-separable algebra over a field $\Bbbk$ is necessarily trivial.

Finally in Section \ref{sec:4} we provide a more general version of our starting example $(\mathbf{T%
},\mathbf{P})$ involving monoidal categories and bialgebras therein.

\section{Heavily separable functors}\label{sec:1}

In this section we collect general facts about heavily separable functors.

\begin{definition}
\label{def:heavsep}For every functor $F:\mathcal{B}\rightarrow \mathcal{A}$
we set%
\begin{equation*}
F_{X,Y}:\mathrm{Hom}_{\mathcal{B}}\left( X,Y\right) \rightarrow \mathrm{Hom}%
_{\mathcal{A}}\left( FX,FY\right) :f\mapsto Ff
\end{equation*}%
Recall that $F$ is called \textbf{separable} if there is a natural transformation
\begin{equation*}
P_{-,-}:=P_{-,-}^{F}:\mathrm{Hom}_{\mathcal{A}}\left( F-,F-\right)
\rightarrow \mathrm{Hom}_{\mathcal{B}}\left( -,-\right)
\end{equation*}%
such that $P_{X,Y}\circ F_{X,Y}=\mathrm{Id}$ for every $X,Y$ objects in $\mathcal{B}$.

We say that $F$ is \textbf{heavily separable } (\textbf{h-separable} for
short) if it is separable and the $P_{X,Y}$'s make commutative the following
diagram for every $X,Y,Z\in \mathcal{B}.$%
\begin{equation*}
\xymatrixcolsep{1.5cm}\xymatrixrowsep{0.7cm} \xymatrix{\mathrm{Hom}_{%
\mathcal{A}}\left( FX,FY\right) \times \mathrm{Hom}_{\mathcal{A}}\left(
FY,FZ\right)\ar[rr]^{P_{X,Y}\times
P_{Y,Z}}\ar[d]_{\circ}&&\mathrm{Hom}_{\mathcal{B}}\left( X,Y\right) \times
\mathrm{Hom}_{\mathcal{B}}\left( Y,Z\right)\ar[d]^{\circ}\\
\mathrm{Hom}_{\mathcal{A}}\left(
FX,FZ\right)\ar[rr]^{P_{X,Z}}&&\mathrm{Hom}_{\mathcal{B}}\left( X,Z\right) }
\end{equation*}

\begin{invisible}
\begin{equation*}
\begin{array}{ccc}
\mathrm{Hom}_{\mathcal{A}}\left( FX,FY\right) \times \mathrm{Hom}_{\mathcal{A%
}}\left( FY,FZ\right) & \overset{P_{X,Y}\times P_{Y,Z}}{\longrightarrow } &
\mathrm{Hom}_{\mathcal{B}}\left( X,Y\right) \times \mathrm{Hom}_{\mathcal{B}%
}\left( Y,Z\right) \\
\circ \downarrow &  & \downarrow \circ \\
\mathrm{Hom}_{\mathcal{A}}\left( FX,FZ\right) & \overset{P_{X,Z}}{%
\longrightarrow } & \mathrm{Hom}_{\mathcal{B}}\left( X,Z\right)%
\end{array}%
\end{equation*}
\end{invisible}

where the vertical arrows are the obvious compositions. On elements the
above diagram means that $P_{X,Z}(f\circ g)=P_{Y,Z}(f)\circ P_{X,Y}(g).$
\end{definition}

\begin{remark}
We were tempted to use the word "strongly" at first, instead of "heavily",
but a notion of "strongly separable functor" already appeared in the
literature in connection with graded rings in \cite[Definition 3.1]{CGN}.
\end{remark}

\begin{remark}
The Maschke's Theorem for separable functors asserts that for a separable
functor $F:\mathcal{B}\rightarrow \mathcal{A}$ a morphism $f:X\rightarrow Y$
splits (resp. cosplits) if and only if $F\left( f\right) $ does. Explicitly,
if $F\left( f\right) \circ g=\mathrm{Id}$ (resp. $g\circ F\left( f\right) =%
\mathrm{Id}$) for some morphism $g$ then $f\circ P_{Y,X}\left( g\right) =%
\mathrm{Id}$ (resp. $P_{Y,X}\left( g\right) \circ f=\mathrm{Id}$). If $%
F\left( f\right) \circ g=\mathrm{Id}$ and $F\left( f^{\prime }\right) \circ
g^{\prime }=\mathrm{Id}$ for $f:X\rightarrow Y,f^{\prime }:Y\rightarrow Z,$
then $f\circ P_{Y,X}\left( g\right) =\mathrm{Id}$ and $f^{\prime }\circ
P_{Z,Y}\left( g^{\prime }\right) =\mathrm{Id}$ so that $f^{\prime }\circ
f\circ P_{Y,X}\left( g\right) \circ P_{Z,Y}\left( g^{\prime }\right) =%
\mathrm{Id}$ so that $P_{Y,X}\left( g\right) \circ P_{Z,Y}\left( g^{\prime
}\right) $ is a section of $f^{\prime }\circ f.$ Since $F\left( f^{\prime
}\circ f\right) \circ g\circ g^{\prime }=\mathrm{Id},$ we also have $%
f^{\prime }\circ f\circ P_{Z,X}\left( g\circ g^{\prime }\right) =\mathrm{Id}$
so that $P_{Z,X}\left( g\circ g^{\prime }\right) $ is another section of $%
f^{\prime }\circ f.$ In general these two section may differ but not in case
$F$ is h-separable. Thus in some sense we get a sort of functoriality of the
splitting. A similar remark holds for cosplittings. We thank J. Vercruysse
for this observation.
\end{remark}

\begin{lemma}
\label{lem:compheav}Let $F:\mathcal{C}\rightarrow \mathcal{B}$ and $G:%
\mathcal{B}\rightarrow \mathcal{A}$ be functors.

\begin{enumerate}
\item[i)] If $F$ and $G$ are h-separable so is $GF.$

\item[ii)] If $GF$ is h-separable so is $F.$

\item[iii)] If $G$ is h-separable, then $F$ is h-sparable if and only if so
is $GF.$
\end{enumerate}
\end{lemma}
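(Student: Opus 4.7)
The plan is to imitate the well-known proof that ordinary separability is preserved by composition, and to check in each case that the extra multiplicativity axiom in Definition \ref{def:heavsep} also carries through. Throughout I denote the h-separability data of a functor $H$ by $P^{H}_{-,-}$. Part iii) will follow formally from i) and ii), so the work is concentrated in the first two items.

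For i), I set
\[
P^{GF}_{X,Y} := P^{F}_{X,Y} \circ P^{G}_{FX,FY} : \mathrm{Hom}_{\mathcal{A}}(GFX,GFY)\to \mathrm{Hom}_{\mathcal{C}}(X,Y).
\]
Naturality follows from the naturalities of $P^{F}$ and $P^{G}$, and the retraction identity $P^{GF}_{X,Y}\circ (GF)_{X,Y}=\mathrm{Id}$ is immediate since $(GF)h = G(Fh)$. For the multiplicativity axiom, given composable $g:GFX\to GFY$ and $f:GFY\to GFZ$, I first use h-separability of $G$ to rewrite
\[
P^{G}_{FX,FZ}(f\circ g) = P^{G}_{FY,FZ}(f)\circ P^{G}_{FX,FY}(g),
\]
and then apply $P^{F}$ and use h-separability of $F$ to split this composition of morphisms in $\mathcal{B}$ across the two factors, producing $P^{GF}_{Y,Z}(f)\circ P^{GF}_{X,Y}(g)$.

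For ii), starting from an h-separability datum $P^{GF}$ on the composite, I define $P^{F}_{X,Y}(f):=P^{GF}_{X,Y}(Gf)$. Naturality is clear from functoriality of $G$ and naturality of $P^{GF}$, and the retraction property reduces to $P^{GF}\circ(GF)_{X,Y}=\mathrm{Id}$. The multiplicativity axiom for $P^{F}$ is then inherited directly from the one for $P^{GF}$, since
\[
P^{F}_{X,Z}(f\circ g) = P^{GF}_{X,Z}(Gf\circ Gg) = P^{GF}_{Y,Z}(Gf)\circ P^{GF}_{X,Y}(Gg) = P^{F}_{Y,Z}(f)\circ P^{F}_{X,Y}(g).
\]
Finally, iii) is obtained by combining i) (applied to $F$ and the h-separable $G$ to get $F$ h-separable $\Rightarrow GF$ h-separable) with ii) (which provides the converse without any hypothesis on $G$). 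The only real bookkeeping is the verification of naturality for the induced $P$'s, which is entirely routine; I therefore do not foresee any genuine obstacle in carrying the proof out.
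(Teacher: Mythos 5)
Your proposal is correct and follows essentially the same route as the paper: for i) you take $P^{GF}_{X,Y}=P^{F}_{X,Y}\circ P^{G}_{FX,FY}$ and for ii) you take $P^{F}_{X,Y}=P^{GF}_{X,Y}\circ G_{FX,FY}$, exactly the data used in the paper (borrowed from the separable case in \cite{NVV}), and you verify the multiplicativity axiom by the same two-step diagram-pasting argument, with iii) deduced formally from i) and ii). No gaps.
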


\begin{proof}
i). By \cite[Lemma 1]{NVV} we know that $GF$ is separable with respect to $%
P_{X,Y}^{GF}:=P_{X,Y}^{F}\circ P_{FX,FY}^{G}.$ As a consequence, since $F$
and $G$ are h-separable, the following diagram {\footnotesize
\begin{equation*}
\xymatrixcolsep{1.5cm}\xymatrixrowsep{0.7cm}\xymatrix{\mathrm{Hom}_{%
\mathcal{A}}\left( GFX,GFY\right) \times \mathrm{Hom}_{\mathcal{A}}\left(
GFY,GFZ\right)\ar[r]^{P^G_{FX,FY}\times
P^G_{FY,FZ}}\ar[d]_{\circ}&\mathrm{Hom}_{\mathcal{B}}\left( FX,FY\right)
\times \mathrm{Hom}_{\mathcal{B}}\left(
FY,FZ\right)\ar[d]^{\circ}\ar[r]^-{P^F_{X,Y}\times
P^F_{Y,Z}}&\mathrm{Hom}_{\mathcal{C}}\left( X,Y\right) \times
\mathrm{Hom}_{\mathcal{C}}\left( Y,Z\right)\ar[d]^{\circ}\\
\mathrm{Hom}_{\mathcal{A}}\left(
GFX,GFZ\right)\ar[r]^{P^G_{FX,FZ}}&\mathrm{Hom}_{\mathcal{B}}\left(
FX,FZ\right)\ar[r]^{P^F_{X,Z}}&\mathrm{Hom}_{\mathcal{C}}\left( X,Z\right) }
\end{equation*}%
}

\begin{invisible}
{\tiny
\begin{equation*}
\begin{array}{ccccc}
\mathrm{Hom}_{\mathcal{A}}\left( GFX,GFY\right) \times \mathrm{Hom}_{%
\mathcal{A}}\left( GFY,GFZ\right) & \overset{P_{FX,FY}^{G}\times
P_{FY,FZ}^{G}}{\longrightarrow } & \mathrm{Hom}_{\mathcal{B}}\left(
FX,FY\right) \times \mathrm{Hom}_{\mathcal{B}}\left( FY,FZ\right) & \overset{%
P_{X,Y}^{F}\times P_{Y,Z}^{F}}{\longrightarrow } & \mathrm{Hom}_{\mathcal{C}%
}\left( X,Y\right) \times \mathrm{Hom}_{\mathcal{C}}\left( Y,Z\right) \\
\circ \downarrow &  & \circ \downarrow &  & \downarrow \circ \\
\mathrm{Hom}_{\mathcal{A}}\left( GFX,GFZ\right) & \overset{P_{FX,FZ}^{G}}{%
\longrightarrow } & \mathrm{Hom}_{\mathcal{B}}\left( FX,FZ\right) & \overset{%
P_{X,Z}^{F}}{\longrightarrow } & \mathrm{Hom}_{\mathcal{C}}\left( X,Z\right)%
\end{array}%
\end{equation*}%
}
\end{invisible}

commutes so that $GF$ is h-separable.

ii). By \cite[Lemma 1]{NVV} we know that $F_{X,Y}$ cosplits naturally
through $P_{X,Y}^{F}:=P_{X,Y}^{GF}\circ G_{FX,FY}.$ On the other hand, since
$G$ is a functor and $GF$ is h-separable the following diagram commutes
{\footnotesize
\begin{equation*}
\xymatrixcolsep{1.5cm}\xymatrixrowsep{0.7cm}\xymatrix{\mathrm{Hom}_{%
\mathcal{B}}\left( FX,FY\right) \times \mathrm{Hom}_{\mathcal{B}}\left(
FY,FZ\right)\ar[r]^{G_{FX,FY}\times
G_{FY,FZ}}\ar[d]_{\circ}&\mathrm{Hom}_{\mathcal{A}}\left( GFX,GFY\right)
\times \mathrm{Hom}_{\mathcal{A}}\left(
GFY,GFZ\right)\ar[d]^{\circ}\ar[r]^-{P^{GF}_{X,Y}\times
P^{GF}_{Y,Z}}&\mathrm{Hom}_{\mathcal{C}}\left( X,Y\right) \times
\mathrm{Hom}_{\mathcal{C}}\left( Y,Z\right)\ar[d]^{\circ}\\
\mathrm{Hom}_{\mathcal{B}}\left(
FX,FZ\right)\ar[r]^{G_{FX,FZ}}&\mathrm{Hom}_{\mathcal{A}}\left(
GFX,GFZ\right)\ar[r]^{P^{GF}_{X,Z}}&\mathrm{Hom}_{\mathcal{C}}\left(
X,Z\right) }
\end{equation*}%
}

\begin{invisible}
{\tiny
\begin{equation*}
\begin{array}{ccccc}
\mathrm{Hom}_{\mathcal{B}}\left( FX,FY\right) \times \mathrm{Hom}_{\mathcal{B%
}}\left( FY,FZ\right) & \overset{G_{FX,FY}\times G_{FY,FZ}}{\longrightarrow }
& \mathrm{Hom}_{\mathcal{A}}\left( GFX,GFY\right) \times \mathrm{Hom}_{%
\mathcal{A}}\left( GFY,GFZ\right) & \overset{P_{X,Y}^{GF}\times P_{Y,Z}^{GF}}%
{\longrightarrow } & \mathrm{Hom}_{\mathcal{C}}\left( X,Y\right) \times
\mathrm{Hom}_{\mathcal{C}}\left( Y,Z\right) \\
\circ \downarrow &  & \circ \downarrow &  & \downarrow \circ \\
\mathrm{Hom}_{\mathcal{B}}\left( FX,FZ\right) & \overset{G_{FX,FZ}}{%
\longrightarrow } & \mathrm{Hom}_{\mathcal{A}}\left( GFX,GFZ\right) &
\overset{P_{X,Z}^{GF}}{\longrightarrow } & \mathrm{Hom}_{\mathcal{C}}\left(
X,Z\right)%
\end{array}%
\end{equation*}%
}
\end{invisible}

so that $F$ is h-separable.

iii) It follows trivially from i) and ii).
\end{proof}

\begin{remark}
The present remark was pointed out by J. Vercruysse. If the functor $F:%
\mathcal{B}\rightarrow \mathcal{A}$ is a split monomorphism, meaning that there is a
functor $G:\mathcal{A}\rightarrow \mathcal{B}$ such that $G\circ F=\mathrm{Id%
},$ then $F$ is h-separable. This follows by setting $P_{X,Y}:=G_{X,Y}$ as
in Definition \ref{def:heavsep}. It can also be proved by means of Lemma \ref%
{lem:compheav},ii).
\end{remark}

\begin{lemma}
\label{lem:ffhsep}A full and faithful functor is h-separable.
\end{lemma}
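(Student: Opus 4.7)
The plan is to take $P_{X,Y}$ to be simply the two-sided inverse of $F_{X,Y}$, which exists because $F$ is full and faithful, i.e. because each map $F_{X,Y}:\mathrm{Hom}_{\mathcal{B}}(X,Y)\to\mathrm{Hom}_{\mathcal{A}}(FX,FY)$ is a bijection of sets. Setting $P_{X,Y}:=F_{X,Y}^{-1}$ gives $P_{X,Y}\circ F_{X,Y}=\mathrm{Id}$ by construction, so $F$ will be separable as soon as naturality in $X$ and $Y$ is checked. Naturality is automatic, however: $F_{-,-}$ is natural (this is just functoriality of $F$), and the pointwise inverse of a natural isomorphism is natural.

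It then remains to verify the commutativity of the hexagonal diagram in Definition \ref{def:heavsep}, i.e. $P_{X,Z}(f\circ g)=P_{Y,Z}(f)\circ P_{X,Y}(g)$ for all $g\in\mathrm{Hom}_{\mathcal{A}}(FX,FY)$ and $f\in\mathrm{Hom}_{\mathcal{A}}(FY,FZ)$. Since $F_{X,Y}$ and $F_{Y,Z}$ are surjective, we may write $g=F\tilde{g}$ and $f=F\tilde{f}$ with $\tilde{g}=P_{X,Y}(g)$ and $\tilde{f}=P_{Y,Z}(f)$ uniquely determined. Functoriality of $F$ then gives $f\circ g=F\tilde{f}\circ F\tilde{g}=F(\tilde{f}\circ\tilde{g})$, and applying $P_{X,Z}$ (i.e.\ $F_{X,Z}^{-1}$) yields $P_{X,Z}(f\circ g)=\tilde{f}\circ\tilde{g}=P_{Y,Z}(f)\circ P_{X,Y}(g)$, which is exactly what is required.

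No step looks problematic here; the only thing to watch is being careful that the retraction coming from fullness-plus-faithfulness is genuinely a natural transformation (not just a family of retractions), but this is immediate from the observation above. Alternatively, one could deduce the statement from Lemma \ref{lem:compheav}\,ii) applied to a left inverse of $F$ whenever one exists, but the direct argument via the inverse of $F_{X,Y}$ is cleaner and works in full generality.
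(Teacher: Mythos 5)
Your proof is correct and follows essentially the same route as the paper: take $P_{X,Y}:=F_{X,Y}^{-1}$ and deduce the multiplicativity condition from functoriality of $F$ by reversing the arrows in the composition-compatibility diagram. Nothing further is needed.
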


\begin{proof}
If $F:\mathcal{B}\rightarrow \mathcal{A}$ is full and faithful we have that
the canonical map $F_{X,Y}:\mathrm{Hom}_{\mathcal{B}}\left( X,Y\right)
\rightarrow \mathrm{Hom}_{\mathcal{A}}\left( FX,FY\right) $ is invertible so
that we can take $P_{X,Y}:=F_{X,Y}^{-1}.$ Since $F$ is a functor, the
following diagram commutes
\begin{equation*}
\xymatrixcolsep{1.5cm}\xymatrixrowsep{0.7cm}\xymatrix{
\mathrm{Hom}_{\mathcal{B}}\left( X,Y\right) \times
\mathrm{Hom}_{\mathcal{B}}\left( Y,Z\right)\ar[rr]^{F_{X,Y}\times
F_{Y,Z}}\ar[d]_{\circ}&&\mathrm{Hom}_{\mathcal{A}}\left( FX,FY\right) \times
\mathrm{Hom}_{\mathcal{A}}\left(
FY,FZ\right)\ar[d]^{\circ}\\\mathrm{Hom}_{\mathcal{B}}\left( X,Z\right)
\ar[rr]^{F_{X,Z}}&&\mathrm{Hom}_{\mathcal{A}}\left( FX,FZ\right) }
\end{equation*}

\begin{invisible}
\begin{equation*}
\begin{array}{ccc}
\mathrm{Hom}_{\mathcal{B}}\left( X,Y\right) \times \mathrm{Hom}_{\mathcal{B}%
}\left( Y,Z\right)  & \overset{F_{X,Y}\times F_{Y,Z}}{\longrightarrow } &
\mathrm{Hom}_{\mathcal{A}}\left( FX,FY\right) \times \mathrm{Hom}_{\mathcal{A%
}}\left( FY,FZ\right)  \\
\downarrow \circ  &  & \circ \downarrow  \\
\mathrm{Hom}_{\mathcal{B}}\left( X,Z\right)  & \overset{F_{X,Z}}{%
\longrightarrow } & \mathrm{Hom}_{\mathcal{A}}\left( FX,FZ\right)
\end{array}%
\end{equation*}
\end{invisible}

Reversing the horizontal arrows we obtain that $F$ h-separable.
\end{proof}

\section{Heavily separable adjoint functors}\label{sec:2}

In this section we investigate h-separable functors which are adjoint
functors.

\begin{theorem}[Rafael type Theorem]
\label{thm:Rafael}Let $\left( L,R,\eta ,\epsilon \right) $ be an adjunction
where $L:\mathcal{B}\rightarrow \mathcal{A}.$

\begin{enumerate}
\item[(i)] $L$ is h-separable if and only if there is a natural
transformation $\gamma :RL\rightarrow \mathrm{Id}_{\mathcal{B}}$ such that $%
\gamma \circ \eta =\mathrm{Id}$ and%
\begin{equation}
\gamma \gamma =\gamma \circ R\epsilon L.  \label{form:gamma0}
\end{equation}

\item[(ii)] $R$ is h-separable if and only if there is a natural
transformation $\delta :\mathrm{Id}_{\mathcal{A}}\rightarrow LR$ such that $%
\epsilon \circ \delta =\mathrm{Id}$ and%
\begin{equation}
\delta \delta =L\eta R\circ \delta .  \label{form:delta0}
\end{equation}
\end{enumerate}
\end{theorem}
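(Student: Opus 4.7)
The plan is to build on the classical Rafael Theorem: the separability of $L$ (resp.\ $R$) is already known to be equivalent to the existence of a natural retraction $\gamma:RL\to\mathrm{Id}_{\mathcal{B}}$ of $\eta$ (resp.\ natural section $\delta:\mathrm{Id}_{\mathcal{A}}\to LR$ of $\epsilon$), via the mutually inverse formulas
\[
P_{X,Y}(f)=\gamma_Y\circ R(f)\circ\eta_X,\qquad \gamma_X=P_{RLX,X}(\epsilon_{LX})
\]
and their duals $Q_{X,Y}(k)=\epsilon_Y\circ L(k)\circ\delta_X$, $\delta_X=Q_{X,LRX}(\eta_{RX})$. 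Thus the content of the theorem is precisely that the multiplicative condition on $P_{-,-}$ (resp.\ $Q_{-,-}$) translates into the identity \eqref{form:gamma0} (resp.\ \eqref{form:delta0}).

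For (i), the implication ``$\gamma$ heavy $\Rightarrow P$ heavy'' I would verify by a direct chase. Given $f:LX\to LY$ and $g:LY\to LZ$, set $\beta:=R(g)\circ\eta_Y:Y\to RLZ$ and rewrite
\[
P_{Y,Z}(g)\circ P_{X,Y}(f)=\gamma_Z\circ\beta\circ\gamma_Y\circ R(f)\circ\eta_X.
\]
Naturality of $\gamma$ applied to $\beta$ converts the middle factor $\beta\circ\gamma_Y$ into $\gamma_{RLZ}\circ RL(\beta)$; then \eqref{form:gamma0} rewrites $\gamma_Z\circ\gamma_{RLZ}$ as $\gamma_Z\circ R\epsilon_{LZ}$, and the triangle identity identifies $\epsilon_{LZ}\circ L\beta$ with $g$. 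The expression collapses to $\gamma_Z\circ R(gf)\circ\eta_X=P_{X,Z}(gf)$.

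For the converse ``$P$ heavy $\Rightarrow\gamma$ heavy'' I would avoid manipulating $P$ abstractly and instead specialize the multiplicativity diagram at a single well-chosen pair. Fix $T\in\mathcal{B}$ and take $X:=RLRLT$, $Y:=RLT$, $Z:=T$, $f:=\epsilon_{LRLT}$, $g:=\epsilon_{LT}$. The very definition $\gamma_X=P_{RLX,X}(\epsilon_{LX})$ gives $P_{X,Y}(f)=\gamma_{RLT}$ and $P_{Y,Z}(g)=\gamma_T$; for the composite, the formula $P_{X,Z}(gf)=\gamma_Z\circ R(gf)\circ\eta_X$ together with the triangle identity $R\epsilon_{LRLT}\circ\eta_{RLRLT}=\mathrm{Id}$ yields $\gamma_T\circ R\epsilon_{LT}$. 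The heavy equation for $P$ thus becomes $\gamma_T\circ R\epsilon_{LT}=\gamma_T\circ\gamma_{RLT}$, which is \eqref{form:gamma0} at $T$.

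Part (ii) is proved by the same pattern with $(L,\eta)$ and $(R,\epsilon)$ interchanged and all arrows reversed: for ``$\delta$ heavy $\Rightarrow Q$ heavy'' one uses naturality of $\delta$ to move it past $\epsilon_Y\circ L(k)$ and then applies \eqref{form:delta0} together with a triangle identity; for the converse one specializes at $X:=T$, $Y:=LRT$, $Z:=LRLRT$, $k:=\eta_{RT}$, $l:=\eta_{RLRT}$, so that $Q_{X,Y}(k)=\delta_T$, $Q_{Y,Z}(l)=\delta_{LRT}$, and naturality of $Q$ combined with naturality of $\eta$ identifies $Q_{X,Z}(lk)$ with $L\eta_{RT}\circ\delta_T$. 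The only non-routine aspect I anticipate is a notational one: one must be careful with the two equivalent descriptions of the horizontal composite $\gamma\gamma$ (as $\gamma\circ\gamma RL$ and $\gamma\circ RL\gamma$, which agree by naturality of $\gamma$ at itself), so that \eqref{form:gamma0} can be used in whichever form is convenient at each step.
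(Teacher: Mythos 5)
Your proposal is correct and follows essentially the same route as the paper: both directions of (i) rest on the Rafael formulas $P_{X,Y}(f)=\gamma_Y\circ Rf\circ\eta_X$ and $\gamma_X=P_{RLX,X}(\epsilon_{LX})$, your forward computation (naturality of $\gamma$ at $\beta=Rg\circ\eta_Y$, then \eqref{form:gamma0}, then the triangle identity) is the paper's chain of equalities verbatim, and your converse --- specializing multiplicativity at $f=\epsilon_{LRLT}$, $g=\epsilon_{LT}$ --- is exactly the paper's identity $\gamma_T\circ\gamma_{RLT}=P_{RLT,T}(\epsilon_{LT})\circ P_{RLRLT,RLT}(\epsilon_{LRLT})=P_{RLRLT,T}(\epsilon_{LT}\circ\epsilon_{LRLT})$ read in the other direction. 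The paper dispatches (ii) by duality where you spell out the dual chase, but the content is the same.
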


\begin{proof}
The second part of the statement follows from the first by duality so we
only have to establish (i).

First recall that, by Rafael Theorem \cite[Theorem 1.2]{Rafael}, the
functor $L$ is separable if and only if there is a natural transformation $%
\gamma :RL\rightarrow \mathrm{Id}_{\mathcal{B}}$ such that $\gamma \circ
\eta =\mathrm{Id.}$ Moreover, by construction
\begin{equation}
\gamma X=P_{RLX,X}\left( \epsilon LX\right)  \label{form:gammaP}
\end{equation}%
so that, by naturality of $P_{-,-}$ one has
\begin{equation*}
\gamma Y\circ Rg\circ \eta X=P_{RLY,Y}\left( \epsilon LY\right) \circ
Rg\circ \eta X=P_{X,Y}\left( \epsilon LY\circ LRg\circ L\eta X\right)
=P_{X,Y}\left( g\circ \epsilon LX\circ L\eta X\right) =P_{X,Y}(g).
\end{equation*}%
Assume that (\ref{form:gamma0}) holds. Then, for all $f\in \mathrm{Hom}_{%
\mathcal{A}}\left( LX,LY\right) $ and $g\in \mathrm{Hom}_{\mathcal{A}}\left(
LY,LZ\right) ,$ we have
\begin{eqnarray*}
P_{Y,Z}\left( g\right) \circ P_{X,Y}\left( f\right) &=&\gamma Z\circ Rg\circ
\eta Y\circ \gamma Y\circ Rf\circ \eta X \\
&\overset{\text{nat.}\gamma }{=}&\gamma Z\circ \gamma RL Z\circ RLRg\circ
RL\eta Y\circ Rf\circ \eta X \\
&\overset{\left( \ref{form:gamma0}\right) }{=}&\gamma Z\circ R\epsilon
LZ\circ RLRg\circ RL\eta Y\circ Rf\circ \eta X \\
&=&\gamma Z\circ Rg\circ R\epsilon LY\circ RL\eta Y\circ Rf\circ \eta X \\
&=&\gamma Z\circ Rg\circ Rf\circ \eta X=P_{X,Z}\left( g\circ f\right)
\end{eqnarray*}%
so that $P_{Y,Z}\left( g\right) \circ P_{X,Y}\left( f\right) =P_{X,Z}\left(
g\circ f\right) $ and hence $L$ is h-separable. Conversely, if the latter
condition holds for every $X,Y,Z$ and $f,g$ as above, we have
\begin{eqnarray*}
&&\gamma \gamma X=\gamma X\circ \gamma RLX\overset{(\ref{form:gammaP})}{=}%
P_{RLX,X}\left( \epsilon LX\right) \circ P_{RLRLX,RLX}\left( \epsilon
LRLX\right) \\
&=&P_{RLRLX,X}\left( \epsilon LX\circ \epsilon LRLX\right) =\gamma X\circ
R\left( \epsilon LX\circ \epsilon LRLX\right) \circ \eta RLRLX \\
&=&\gamma X\circ R\epsilon LX\circ R\epsilon LRLX\circ \eta RLRLX=\gamma
X\circ R\epsilon LX
\end{eqnarray*}%
so that (\ref{form:gamma0}) holds.
\end{proof}

\begin{remark}
Let $\gamma $ be as in Theorem \ref{thm:Rafael}. Then%
\begin{equation*}
\xymatrixcolsep{0.7cm}\xymatrixrowsep{0.7cm}
\xymatrix{RLRL\ar@<.5ex>[rr]^{RL\gamma}\ar@<-.5ex>[rr]_{R\epsilon L}&&RL\ar[r]^{\gamma}&\id}
\end{equation*}
\begin{invisible}
\begin{equation*}
\begin{array}{ccccc}
RLRL & \overset{RL\gamma }{\underset{R\epsilon L}{\rightrightarrows }} & RL
& \overset{\gamma }{\longrightarrow } & \mathrm{Id}%
\end{array}%
\end{equation*}%
\end{invisible}
is a split coequalizer (in the sense of \cite[Definition 4.4.2 ]{Borceux2})
taking $u=R\epsilon L,v=RL\gamma ,r=\eta RL,q=\gamma $ and $s=\eta .$ In
particular the coequalizer above is absolute, i.e. preserved by every
functor defined on $\mathcal{B}$. One can see this as a consequence of the
fact that, for every $B\in \mathcal{B},$ the pair $\left( B,\gamma B\right)$ belongs to $_{RL}\mathcal{B}$, i.e. the Eilenberg-Moore category of the monad $\left(RL,R\epsilon L,\eta \right) $, and hence \cite[Lemma 4.4.3 ]{Borceux2} applies to this pair. A similar remark holds for $\delta $ as in Theorem \ref{thm:Rafael} in connection with the Eilenberg-Moore category $\mathcal{B}^{LR}$ of the comonad $\left( LR,L\eta
R,\epsilon \right) $.
\end{remark}

\begin{proposition}
\label{pro:heavsepU} Let $\left( L,R\right) $ be an adjunction.

\begin{itemize}
\item[$\left( 1\right) $] The functor $L$ is h-separable if and only if the
forgetful functor $U:{_{RL}\mathcal{B}}\rightarrow \mathcal{B}$ is a split epimorphism i.e. there is
a functor $\Gamma :\mathcal{B}\rightarrow {_{RL}\mathcal{B}}$ such that $%
U\circ \Gamma =\mathrm{Id}_{\mathcal{B}}.$

\item[$\left( 2\right) $] The functor $R$ is h-separable if and only if the
functor forgetful $U:\mathcal{B}^{LR}\rightarrow \mathcal{B}$ is a split epimorphism i.e. there is a functor
$\Gamma :\mathcal{B}\rightarrow \mathcal{B}^{LR}$ such that $U\circ \Gamma =%
\mathrm{Id}_{\mathcal{B}}.$
\end{itemize}
\end{proposition}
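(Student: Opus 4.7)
The plan is to reduce everything to the Rafael-type criterion of Theorem~\ref{thm:Rafael}, and then observe that the data it produces is precisely the data of a lift of the identity through the forgetful functor from the Eilenberg--Moore category of the monad $(RL,R\epsilon L,\eta)$. I will only write out part~$(1)$; part~$(2)$ will follow by formal duality, since it is the analogous statement for the comonad $(LR,L\eta R,\epsilon)$ and its Eilenberg--Moore category $\mathcal{B}^{LR}$.

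Recall that an object of ${_{RL}\mathcal{B}}$ is a pair $(B,h)$ with $h\colon RLB\to B$ satisfying the unit axiom $h\circ\eta B=\mathrm{Id}_B$ and the associativity axiom $h\circ RLh=h\circ R\epsilon LB$; a morphism $(B,h)\to(B',h')$ is an arrow $f\colon B\to B'$ with $h'\circ RLf=f\circ h$. A functor $\Gamma\colon\mathcal{B}\to{_{RL}\mathcal{B}}$ with $U\circ\Gamma=\mathrm{Id}_\mathcal{B}$ is therefore, by definition, the assignment to each $B$ of a morphism $\gamma B\colon RLB\to B$ satisfying the two algebra axioms, such that every $f\colon B\to B'$ is a morphism of $RL$-algebras from $(B,\gamma B)$ to $(B',\gamma B')$; this last property is exactly the naturality of the family $\gamma=\{\gamma B\}_B$.

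Given this dictionary, the proof is a direct translation. Assume $L$ is h-separable; by Theorem~\ref{thm:Rafael}(i) pick a natural $\gamma\colon RL\to\mathrm{Id}_\mathcal{B}$ with $\gamma\circ\eta=\mathrm{Id}$ and $\gamma\gamma=\gamma\circ R\epsilon L$. Evaluating the latter at $B$ and using naturality of $\gamma$ to identify $(\gamma\gamma)B$ with $\gamma B\circ RL(\gamma B)$, we obtain precisely the associativity axiom, while $\gamma\circ\eta=\mathrm{Id}$ gives the unit axiom. Hence $\Gamma(B):=(B,\gamma B)$ and $\Gamma(f):=f$ defines a functor $\Gamma\colon\mathcal{B}\to{_{RL}\mathcal{B}}$ (functoriality of $\Gamma$ on morphisms is automatic, while the fact that $f$ is an algebra map is naturality of $\gamma$), and clearly $U\Gamma=\mathrm{Id}_\mathcal{B}$. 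Conversely, given such a $\Gamma$, set $\gamma B$ to be the $RL$-action of $\Gamma(B)$; naturality of $\gamma$ follows from the fact that $\Gamma(f)=f$ must be a morphism in ${_{RL}\mathcal{B}}$, and the unit and associativity axioms provide exactly the two conditions needed in Theorem~\ref{thm:Rafael}(i), proving that $L$ is h-separable.

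There is no real obstacle here, as the proof amounts to unravelling definitions; the only point requiring a bit of care is the identification of the horizontal composition $\gamma\gamma$, evaluated at $B$, with $\gamma B\circ RL(\gamma B)$, which matches the associativity axiom in the standard presentation of Eilenberg--Moore algebras. Once this is in place, the equivalence in $(1)$ is immediate, and $(2)$ is obtained by dualising (replacing unit/counit, $\gamma$ by $\delta$, and the monad $RL$ by the comonad $LR$).
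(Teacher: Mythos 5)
Your proof is correct and follows essentially the same route as the paper: both reduce part $(1)$ to Theorem~\ref{thm:Rafael}(i) and then identify the data $\left(\gamma,\ \gamma\circ\eta=\mathrm{Id},\ \gamma\gamma=\gamma\circ R\epsilon L\right)$ with a section $\Gamma$ of the forgetful functor $U:{_{RL}\mathcal{B}}\rightarrow\mathcal{B}$, naturality of $\gamma$ corresponding to $f$ being a morphism of $RL$-algebras, and part $(2)$ being dual. The one point you flag --- that $(\gamma\gamma)B=\gamma B\circ RL(\gamma B)$ matches the associativity axiom --- is handled identically in the paper, so there is nothing to add.
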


\begin{proof}
We just prove $\left( 1\right) ,$ the proof of $\left( 2\right) $ being
similar. By Theorem \ref{thm:Rafael}, $L$ is h-separable if and only if
there is a natural transformation $\gamma :RL\rightarrow \mathrm{Id}_{%
\mathcal{B}}$ such that $\gamma \circ \eta =\mathrm{Id}$ and (\ref%
{form:gamma0}) holds. For every $B\in \mathcal{B},$ define $\Gamma B:=\left(
B,\gamma B\right) .$ Then $\Gamma B\in {_{RL}\mathcal{B}}$ by the properties
of $\gamma .$ Moreover any morphism $f:B\rightarrow C$ fulfills $f\circ
\gamma B=\gamma C\circ RLf$ by naturality of $\gamma .$ This means that $f$
induces a morphism $\Gamma f:\Gamma B\rightarrow \Gamma C$ such that $%
U\Gamma f=f.$ We have so defined a functor $\Gamma :\mathcal{B}\rightarrow
\mathcal{B}_{RL}$ such that $U\circ \Gamma =\mathrm{Id}_{\mathcal{B}}.$

Conversely, let $\Gamma :\mathcal{B}\rightarrow {_{RL}\mathcal{B}}$ be a
functor such that $U\circ \Gamma =\mathrm{Id}_{\mathcal{B}}.$ Then, for
every $B\in \mathcal{B},$ we have that $\Gamma B=\left( B,\gamma B\right) $
for some morphism $\gamma B:RLB\rightarrow B.$ Since $\Gamma B\in \mathcal{B}%
_{RL}$ we must have that $\gamma B\circ \eta B=B$ and $\gamma B\circ
RL\gamma B=\gamma B\circ R\epsilon LB.$ Given a morphism $f:B\rightarrow C,$
we have that $\Gamma f:\Gamma B\rightarrow \Gamma C$ is a morphism in ${_{RL}%
\mathcal{B}},$ which means that $f\circ \gamma B=\gamma C\circ RLf$ i.e. $%
\gamma :=\left( \gamma B\right) _{B\in \mathcal{B}}$ is a natural
transformation. By the foregoing $\gamma \circ \eta =\mathrm{Id}$ and (\ref%
{form:gamma0}) holds.
\end{proof}

\begin{corollary}
\label{coro:heavsepU}Let $\left( L,R\right) $ be an adjunction.

\begin{itemize}
\item[$\left( 1\right) $] Assume that $R$ is strictly monadic (i.e. the
comparison functor is an isomorphism of categories). Then the functor $L$ is
h-separable if and only if $R$ is a split epimorphism.

\item[$\left( 2\right) $] Assume that $L$ is strictly comonadic (i.e. the
cocomparison functor is an isomorphism of categories). Then the functor $R$
is h-separable if and only if $L$ is a split epimorphism.
\end{itemize}
\end{corollary}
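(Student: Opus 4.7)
The plan is to deduce this corollary directly from Proposition \ref{pro:heavsepU} by using the defining property of the (co)comparison functor: namely that it factors $R$ through the forgetful functor out of the Eilenberg-Moore category.

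For (1), recall that if $K:\mathcal{A}\to{_{RL}\mathcal{B}}$ denotes the comparison functor sending $A$ to $(RA,R\epsilon A)$ and a morphism $f$ to $Rf$, then by construction one has the identity $U\circ K=R$, where $U:{_{RL}\mathcal{B}}\to\mathcal{B}$ is the forgetful functor. First I would invoke Proposition \ref{pro:heavsepU}(1) to reduce h-separability of $L$ to the existence of a functor $\Gamma:\mathcal{B}\to{_{RL}\mathcal{B}}$ with $U\circ\Gamma=\mathrm{Id}_{\mathcal{B}}$. Now, under the hypothesis that $R$ is strictly monadic, $K$ is an isomorphism of categories, so $K^{-1}$ exists. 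In the forward direction, given such $\Gamma$, set $E:=K^{-1}\circ\Gamma:\mathcal{B}\to\mathcal{A}$ and check $R\circ E=U\circ K\circ K^{-1}\circ\Gamma=U\circ\Gamma=\mathrm{Id}_{\mathcal{B}}$, so $R$ is a split epimorphism. Conversely, if there is $E:\mathcal{B}\to\mathcal{A}$ with $R\circ E=\mathrm{Id}_{\mathcal{B}}$, set $\Gamma:=K\circ E$ and verify $U\circ\Gamma=U\circ K\circ E=R\circ E=\mathrm{Id}_{\mathcal{B}}$, so Proposition \ref{pro:heavsepU}(1) applies to give h-separability of $L$.

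Part (2) is formally dual. Using the cocomparison functor $K':\mathcal{A}\to\mathcal{B}^{LR}$, which satisfies $U\circ K'=L$ with $U:\mathcal{B}^{LR}\to\mathcal{B}$ the forgetful functor, and assuming $L$ strictly comonadic so that $K'$ is an isomorphism of categories, the same two-line argument combined with Proposition \ref{pro:heavsepU}(2) converts h-separability of $R$ into the existence of a one-sided inverse of $L$ and vice versa.

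There is no real obstacle here; the only thing to be careful about is which direction the (co)comparison factorization runs and that we are using $K$ (respectively $K'$) as an isomorphism of categories rather than merely an equivalence, so that its inverse is a genuine functor to be composed on the nose. Consequently, the proof will fit in a few lines once the identity $U\circ K=R$ (respectively $U\circ K'=L$) is recalled.
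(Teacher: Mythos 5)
Your proposal is correct and follows essentially the same route as the paper: both reduce the statement to Proposition \ref{pro:heavsepU} via the identity $U\circ K=R$ (resp.\ $U\circ K'=L$) and the invertibility of the (co)comparison functor, so that $R$ (resp.\ $L$) splits if and only if $U$ does. You merely make explicit the two compositions $E:=K^{-1}\circ\Gamma$ and $\Gamma:=K\circ E$ that the paper leaves implicit.
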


\begin{proof}
We just prove $\left( 1\right) ,$ the proof of $\left( 2\right) $ being
similar. Since the comparison functor $K:\mathcal{A}\rightarrow \mathcal{B}%
_{RL}$ is an isomorphism of categories and $U\circ K=R$, we get that $R$ is a
split epimorphism if and only if $U$ is a split epimorphism.
By Proposition \ref{pro:heavsepU}, this is equivalent to the h-separability
of $L.$
\end{proof}

\begin{example}
In Remark \ref{rem:Tnotheavy} we will obtain that the tensor algebra functor $T:\mathrm{Vec}%
_{\Bbbk }\rightarrow \mathrm{Alg}_{\Bbbk }$ is
separable but not h-separable.
\end{example}

\begin{definition}
Following \cite[Section 4]{LMW} we say that a \textbf{grouplike morphism}
for a comonad $\left( C,\Delta :C\rightarrow CC,\epsilon :C\rightarrow
\mathrm{Id}\right) $ is a natural transformation $\delta :\mathrm{Id}%
\rightarrow C$ such that $\epsilon \circ \delta =\mathrm{Id}$ and $\delta
\delta =\Delta \circ \delta . $ Dually an \textbf{augmentation} for a monad $%
\left( M,m:MM\rightarrow M,\eta :\mathrm{Id}\rightarrow M\right) $ is a
natural transformation $\gamma :M\rightarrow \mathrm{Id}$ such that $\gamma
\circ \eta =\mathrm{Id}$ and $\gamma \gamma =\gamma \circ m.$
\end{definition}

An immediate consequence of the previous definition and Theorem \ref%
{thm:Rafael} is the following result.

\begin{corollary}
\label{coro:hsepaug}Let $\left( L,R,\eta ,\epsilon \right) $ be an
adjunction.

\begin{enumerate}
\item $L$ is h-separable if and only if the monad $\left( RL,R\epsilon
L,\eta \right) $ has an augmentation.

\item $R$ is h-separable if and only if the comonad $\left( LR,L\eta
R,\epsilon \right) $ has a grouplike morphism.
\end{enumerate}
\end{corollary}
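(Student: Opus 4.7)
The plan is to show that this corollary is essentially a re-reading of Theorem \ref{thm:Rafael} in the vocabulary introduced by the preceding definition. There is genuinely nothing to prove beyond unwinding notation: the natural transformations $\gamma$ and $\delta$ furnished by the Rafael-type theorem satisfy exactly the axioms that define, respectively, an augmentation of $(RL,R\epsilon L,\eta)$ and a grouplike morphism of $(LR,L\eta R,\epsilon)$.

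For part (1), I would apply Theorem \ref{thm:Rafael}(i) directly: $L$ is h-separable if and only if there exists a natural transformation $\gamma:RL\to\mathrm{Id}_{\mathcal{B}}$ with $\gamma\circ\eta=\mathrm{Id}$ and $\gamma\gamma=\gamma\circ R\epsilon L$. Comparing with the definition of an augmentation for a monad $(M,m,\eta)$, namely a natural transformation $\gamma:M\to\mathrm{Id}$ with $\gamma\circ\eta=\mathrm{Id}$ and $\gamma\gamma=\gamma\circ m$, and specializing $M:=RL$ and $m:=R\epsilon L$ (which are precisely the underlying functor and multiplication of the monad attached to the adjunction), the two sets of conditions become literally identical.

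For part (2), I would invoke Theorem \ref{thm:Rafael}(ii) and match it against the definition of a grouplike morphism for a comonad $(C,\Delta,\epsilon)$, specializing $C:=LR$ and $\Delta:=L\eta R$. Again the conditions $\epsilon\circ\delta=\mathrm{Id}$ and $\delta\delta=L\eta R\circ\delta$ are exactly the grouplike axioms $\epsilon\circ\delta=\mathrm{Id}$ and $\delta\delta=\Delta\circ\delta$.

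Since each direction is a tautology once the identifications are made, there is no real obstacle; the only care required is to point out explicitly that $R\epsilon L$ and $L\eta R$ are indeed the multiplication and comultiplication of the monad $RL$ and the comonad $LR$ canonically associated to the adjunction, so that applying the definition from the preceding paragraph truly recovers the hypotheses of Theorem \ref{thm:Rafael}.
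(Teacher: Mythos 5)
Your proposal is correct and matches the paper exactly: the corollary is stated there as an immediate consequence of the definition of augmentation/grouplike morphism and Theorem \ref{thm:Rafael}, with precisely the identifications $M:=RL$, $m:=R\epsilon L$ and $C:=LR$, $\Delta:=L\eta R$ that you spell out. Nothing further is needed.
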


Consider an $A$-coring $\mathcal{C}$ and its set of invariant elements $%
\mathcal{C}^{A}=\left\{ c\in \mathcal{C}\mid ac=ca,\text{ for every }a\in
A\right\} $. In \cite[Theorem 3.3]{Br}, Brzezi\'{n}ski proved that the
induction functor $R:=\left( -\right) \otimes _{A}\mathcal{C}:$ \textrm{Mod}-%
$A\rightarrow \mathcal{M}^{\mathcal{C}}$ is separable if and only if there
is an invariant element $e\in \mathcal{C}^{A}$ such that $\varepsilon _{%
\mathcal{C}}\left( e\right) =1.$ Next result provides a similar
characterization for the h-separable case.

\begin{theorem}
\label{thm:indu}Given an $A$-coring $\mathcal{C}$, the induction functor $%
R:=\left( -\right) \otimes _{A}\mathcal{C}:\mathrm{Mod}$-$A\rightarrow
\mathcal{M}^{\mathcal{C}}$ is h-separable if and only if $\mathcal{C}$ has
an invariant grouplike element.
\end{theorem}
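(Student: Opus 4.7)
The plan is to apply Corollary \ref{coro:hsepaug}(2) to the classical adjunction $\left(L,R,\eta ,\epsilon \right)$ in which $L:\M^{\mathcal{C}}\to \mathrm{Mod}\text{-}A$ is the forgetful functor, $R=\left(-\right)\otimes_{A}\mathcal{C}$, the unit $\eta_{X}=\rho_{X}$ is the $\mathcal{C}$-coaction, and the counit is $\epsilon_{M}=M\otimes_{A}\varepsilon_{\mathcal{C}}$. Under these identifications, the associated comonad $\left(LR,L\eta R,\epsilon \right)$ on $\mathrm{Mod}\text{-}A$ is precisely $\bigl((-)\otimes_{A}\mathcal{C},\,(-)\otimes_{A}\Delta_{\mathcal{C}},\,(-)\otimes_{A}\varepsilon_{\mathcal{C}}\bigr)$. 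Hence by Corollary \ref{coro:hsepaug}(2) the h-separability of $R$ is equivalent to the existence of a grouplike morphism $\delta :\mathrm{Id}_{\mathrm{Mod}\text{-}A}\to (-)\otimes_{A}\mathcal{C}$ for this comonad.

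The second step is a Yoneda-type classification of such $\delta$, in the spirit of Brzezi\'{n}ski's original argument in \cite[Theorem 3.3]{Br}. Setting $e:=\delta_{A}(1)\in A\otimes_{A}\mathcal{C}\cong \mathcal{C}$ and using naturality along the right $A$-linear maps $A\to M$, $1\mapsto m$, one obtains $\delta_{M}(m)=m\otimes_{A}e$ for every $M$ and every $m\in M$. Imposing naturality along the left multiplication $\ell_{a}:A\to A$, $x\mapsto ax$ (which is right $A$-linear), yields $a\otimes_{A}e=1\otimes_{A}ea$ in $A\otimes_{A}\mathcal{C}$, equivalently $ae=ea$ in $\mathcal{C}$, so $e\in \mathcal{C}^{A}$. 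Conversely, for any $e\in \mathcal{C}^{A}$ the formula $\delta_{M}(m):=m\otimes_{A}e$ is a well-defined $A$-linear natural transformation; invariance is exactly what is required for $\delta_{M}$ to be right $A$-linear.

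It then remains to transport the two grouplike axioms to conditions on $e$. Evaluating $\epsilon \circ \delta =\mathrm{Id}$ at $M=A$ and applying $A\otimes_{A}\mathcal{C}\cong \mathcal{C}$ gives $\varepsilon_{\mathcal{C}}(e)=1$. For the second axiom, I would compute both sides componentwise: $(\delta \delta )_{M}(m)=LR(\delta_{M})(m\otimes_{A}e)=m\otimes_{A}e\otimes_{A}e$, while $(L\eta R\circ \delta )_{M}(m)=m\otimes_{A}\Delta_{\mathcal{C}}(e)$; equality at $M=A$ is therefore equivalent to $\Delta_{\mathcal{C}}(e)=e\otimes_{A}e$ in $\mathcal{C}\otimes_{A}\mathcal{C}$. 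Combined with the invariance $e\in \mathcal{C}^{A}$ already established, this says precisely that $e$ is an invariant grouplike element of $\mathcal{C}$, concluding the proof. The one delicate step is the naturality argument in the second paragraph, which pins down $\delta$ from the single element $e$ and simultaneously forces its invariance; the remainder is routine bookkeeping with the comonad structure transported along the identification $LR=(-)\otimes_{A}\mathcal{C}$.
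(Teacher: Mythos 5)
Your proposal is correct and follows essentially the same route as the paper: apply Corollary \ref{coro:hsepaug}(2) to the adjunction between the forgetful functor $L:\mathcal{M}^{\mathcal{C}}\to\mathrm{Mod}\text{-}A$ and $R=(-)\otimes_{A}\mathcal{C}$, and then identify grouplike morphisms of the comonad $\bigl((-)\otimes_{A}\mathcal{C},(-)\otimes_{A}\Delta_{\mathcal{C}},(-)\otimes_{A}\varepsilon_{\mathcal{C}}\bigr)$ with invariant grouplike elements of $\mathcal{C}$ via naturality at the maps $A\to M$, $1\mapsto m$. The only cosmetic difference is that the paper delegates the equivalence of $\epsilon\circ\delta=\mathrm{Id}$ with ``$e\in\mathcal{C}^{A}$ and $\varepsilon_{\mathcal{C}}(e)=1$'' to Brzezi\'{n}ski's proof, whereas you extract the invariance directly from right $A$-linearity of $\delta_{M}$.
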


\begin{proof}
By \cite[Lemma 3.1]{Br}, the functor $R$ is the right adjoint of the
forgetful functor $L:\mathcal{M}^{\mathcal{C}}\rightarrow \mathcal{M}_{A}$.
Thus by Corollary \ref{coro:hsepaug}, $R$ is h-separable if and only if the
comonad $\left( LR,L\eta R,\epsilon \right) $ has a grouplike morphism. A
grouplike morphism for this particular comonad is equivalent to an invariant
grouplike element for the coring $\mathcal{C}$ i.e. an element $e\in
\mathcal{C}^{A}$ such that $\varepsilon _{\mathcal{C}}\left( e\right) =1$
and $\Delta_{\mathcal{C}} \left( e\right) =e\otimes _{A}e.$

\begin{invisible}
The comonad $\left( LR,L\eta R,\epsilon \right) $ has a grouplike morphism
if and only if there is a natural transformation $\nu :\mathrm{Id}\rightarrow LR$ such $%
\epsilon \circ \nu =\mathrm{Id}$ and $\nu \nu =L\eta R\circ \nu .$ Let $M\in
\mathcal{M}_{A}$ and $m\in M.$ Consider $f_{m}:A\rightarrow M:a\mapsto ma.$
By naturality of $\nu $ we have that $\nu M\circ f_{m}=LRf_{m}\circ \nu A$
i.e.%
\begin{equation*}
\left( \nu M\right) \left( ma\right) =\left( f_{m}\otimes _{A}\mathcal{C}%
\right) \left( \left( \nu A\right) \left( a\right) \right) .
\end{equation*}%
Since $\nu A:A\rightarrow A\otimes _{A}\mathcal{C}$ we can write $\left( \nu
A\right) \left( a\right) =\left( \nu A\right) \left( 1\right) a=\left(
1_{A}\otimes _{A}e\right) a=1_{A}\otimes _{A}ea$ for some $e\in \mathcal{C}.$
Thus
\begin{equation*}
\left( \nu M\right) \left( ma\right) =\left( f_{m}\otimes _{A}\mathcal{C}%
\right) \left( 1_{A}\otimes _{A}ea\right) =m\otimes _{A}ea
\end{equation*}%
and hence $\nu M:M\rightarrow M\otimes _{A}\mathcal{C}:m\mapsto m\otimes
_{A}e$ for every $M\in \mathcal{M}_{A}$. The proof of \cite[Theorem 3.3]{Br}
shows that $\epsilon \circ \nu =\mathrm{Id}$ is equivalent to require that $%
e\in \mathcal{C}^{A}\ $and $\varepsilon _{\mathcal{C}}\left( e\right) =1.$
On elements we can rewrite the equality $\nu \nu =L\eta R\circ \nu $ for
every $m\in M$ as%
\begin{equation*}
m\otimes _{A}e\otimes _{A}e=m\otimes _{A}\Delta _{\mathcal{C}}\left( e\right)
\end{equation*}%
which is equivalent to $\Delta _{\mathcal{C}}\left( e\right) =e\otimes
_{A}e. $
\end{invisible}
\end{proof}

\begin{remark}
Let $\mathcal{C}$ be an $A$-coring. We recall that, by \cite[Lemma 5.1]%
{Br}, if $A$ is a right $\mathcal{C}$-comodule via $\rho _{A}:A\rightarrow
A\otimes _{A}\mathcal{C}$, then $g=\rho _{A}\left( 1_{A}\right) $ is a
grouplike element of \ $\mathcal{C}$. Conversely if $g$ is a grouplike
element of \ $\mathcal{C}$, then $A$ is a right $\mathcal{C}$-comodule via $%
\rho _{A}:A\rightarrow A\otimes _{A}\mathcal{C}$ defined by $\rho _{A}\left(
a\right) =1_{A}\otimes _{A}\left( g\cdot a\right) $. Moreover, if $g$ is a
grouplike element of \ $\mathcal{C}$, then, by \cite[page 404]{Br}, $g$ is
an invariant element if and only if $A=A^{\text{co}\mathcal{C}}:=\left\{
a\in A\mid ag=ga\right\} $
\end{remark}

\section{Heavily separable ring homomorphisms}\label{sec:3}

Let $\varphi :R\rightarrow S$ be a ring homomorphism. Consider the induction
functor $\varphi ^{\ast }:=S\otimes _{R}(-): {} R$-$\mathrm{Mod}\to S$-$\mathrm{Mod}$.

\begin{proposition}\label{pro:inducfunhsep}
Let $\varphi :R\rightarrow S$ be a ring homomorphism. Then the induction
functor $\varphi ^{\ast }$ is h-separable if and only if there is a ring
homomorphism $E:S\rightarrow R$ such that $E\circ \varphi =\mathrm{Id}.$
\end{proposition}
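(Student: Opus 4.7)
My plan is to reduce the statement to the existence of an augmentation for the induced monad by applying Corollary \ref{coro:hsepaug}, and then match the ring-theoretic data with this monad data. Concretely, the adjunction $(\varphi^{\ast},\varphi_{\ast})$ between $R$-$\mathrm{Mod}$ and $S$-$\mathrm{Mod}$ produces the monad $(\varphi_{\ast}\varphi^{\ast},\varphi_{\ast}\epsilon\varphi^{\ast},\eta)$ on $R$-$\mathrm{Mod}$, which is nothing but $(S\otimes_{R}-,\mu,\eta)$ with $\mu_{M}(s\otimes s'\otimes m)=ss'\otimes m$ and $\eta_{M}(m)=1\otimes m$. By Corollary \ref{coro:hsepaug}, $\varphi^{\ast}$ is h-separable if and only if this monad admits an augmentation $\gamma:S\otimes_{R}-\to\mathrm{Id}$.

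For the sufficiency, given a ring homomorphism $E:S\to R$ with $E\circ\varphi=\mathrm{Id}$, I would define $\gamma_{M}:S\otimes_{R}M\to M$ by $\gamma_{M}(s\otimes m):=E(s)m$. The identity $E(s\varphi(r))=E(s)r$ makes $\gamma_{M}$ well-defined over the tensor balance; $R$-linearity of $\gamma_{M}$ reduces to $E(\varphi(r)s)=rE(s)$; naturality is obvious; the unit condition $\gamma_{M}\circ\eta_{M}=\mathrm{Id}$ reduces to $E(1_{S})=1_{R}$. Finally, a direct calculation shows $(\gamma_{M}\circ\gamma_{S\otimes_{R}M})(s\otimes s'\otimes m)=E(s)E(s')m$ whereas $(\gamma_{M}\circ\mu_{M})(s\otimes s'\otimes m)=E(ss')m$, so the augmentation identity is equivalent to multiplicativity of $E$.

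For the necessity, given an augmentation $\gamma$, I would set $E(s):=\gamma_{R}(s\otimes_{R}1_{R})\in R$. Additivity is immediate from additivity of $\gamma_{R}$. The equality $\gamma_{R}\circ\eta_{R}=\mathrm{Id}_{R}$ gives $E(1_{S})=1_{R}$, and writing $\varphi(r)\otimes 1=1\otimes r$ in $S\otimes_{R}R$ together with $R$-linearity of $\gamma_{R}$ yields $E(\varphi(r))=r\gamma_{R}(1\otimes 1)=r$, i.e.\ $E\circ\varphi=\mathrm{Id}$.

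The hard part is the multiplicativity of $E$, and this is precisely where the augmentation identity (\ref{form:gamma0}) is used. I would first apply naturality of $\gamma$ to the $R$-linear map $R\to S$, $r\mapsto\varphi(r)s'$, to obtain the formula $\gamma_{S}(s\otimes s')=\varphi(E(s))s'$ in $S$. Transporting this through the natural isomorphism $S\otimes_{R}R\cong S$ (or, equivalently, another application of naturality) then gives
\[
\gamma_{S\otimes_{R}R}(s\otimes s'\otimes 1)=\varphi(E(s))s'\otimes 1=E(s)\cdot(s'\otimes 1).
\]
Applying $\gamma_{R}$ and using its $R$-linearity yields $(\gamma_{R}\circ\gamma_{S\otimes_{R}R})(s\otimes s'\otimes 1)=E(s)E(s')$, whereas $(\gamma_{R}\circ\mu_{R})(s\otimes s'\otimes 1)=\gamma_{R}(ss'\otimes 1)=E(ss')$. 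The augmentation identity equates the two, proving $E(ss')=E(s)E(s')$. The only delicate point is organizing this final computation so that $R$-linearity and naturality of $\gamma$ are invoked at the right places; the rest is essentially bookkeeping.
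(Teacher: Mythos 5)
Your proposal is correct and follows essentially the same route as the paper: both reduce h-separability of $\varphi^{\ast}$ to the existence of an augmentation of the monad $S\otimes_{R}(-)$ (Theorem \ref{thm:Rafael}, i.e.\ Corollary \ref{coro:hsepaug}), identify such a $\gamma$ with an $R$-bimodule retraction $E:S\rightarrow R$ of $\varphi$, and observe that the extra identity (\ref{form:gamma0}) is exactly multiplicativity of $E$. The paper compresses your naturality computations in the converse direction into the remark that every natural retraction of $\eta$ has the form $s\otimes_{R}m\mapsto E(s)m$ because $R$ is a generator of $R$-$\mathrm{Mod}$, but the content is identical.
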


\begin{proof}
By \cite[Theorem 27]{CMZ}, we know that $\varphi ^{\ast }$ is separable if
and only if there is a morphism of $R$-bimodules $E:S\rightarrow R$ such
that $E\left( 1_{S}\right) =1_{R}.$ Using $E$ one defines a natural
transformation $\gamma M:S\otimes _{R}M\rightarrow M:s\otimes _{R}m\mapsto
E\left( s\right) m,$ for every $M\in R$-$\mathrm{Mod,}$ such that $\gamma
\circ \eta =\mathrm{Id}$ where the unit $\eta $ is defined by $\eta
M:M\rightarrow S\otimes _{R}M:m\mapsto 1_{S}\otimes _{R}m.$ All natural
transformations $\gamma $ such that $\gamma \circ \eta =\mathrm{Id}$ are of
this form because $R$ is a generator in $R$-$\mathrm{Mod.}$

\begin{invisible}
For every $m\in M$ set $f_{m}:R\rightarrow M:r\mapsto rm.$ Then by
naturality of $\gamma $ we have%
\begin{equation*}
\left( \gamma M\right) \left( s\otimes _{R}m\right) =\left( \gamma M\right)
\left( S\otimes _{R}f_{m}\right) \left( s\otimes _{R}1_{R}\right)
=f_{m}\left( \gamma R\right) \left( s\otimes _{R}1_{R}\right) =\left( \gamma
R\right) \left( s\otimes _{R}1_{R}\right) \cdot m.
\end{equation*}%
Set $E\left( s\right) :=\left( \gamma R\right) \left( s\otimes
_{R}1_{R}\right) \in R.$ Then $\left( \gamma M\right) \left( s\otimes
_{R}m\right) =E\left( s\right) m.$ Now $m=\left( \gamma M\right) \left( \eta
M\right) \left( m\right) =\left( \gamma M\right) \left( 1_{S}\otimes
_{R}m\right) =E\left( 1_{S}\right) m$ so that $E\left( 1_{S}\right) =1_{R}.$
Moreover $E\left( rs\right) =\left( \gamma R\right) \left( rs\otimes
_{R}1_{R}\right) =\left( \gamma R\right) \left( r\left( s\otimes
_{R}1_{R}\right) \right) =r\left( \gamma R\right) \left( s\otimes
_{R}1_{R}\right) =rE\left( s\right) $ since $\gamma R\in R$-$\mathrm{Mod.}$
Furthermore $E\left( sr\right) =\left( \gamma R\right) \left( sr\otimes
_{R}1_{R}\right) =\left( \gamma R\right) \left( s\otimes _{R}r\right)
=\left( \gamma R\right) \left( s\otimes _{R}r\right) =E\left( s\right) r.$
Hence $E$ is $R$-bilinear.
\end{invisible}

Finally (\ref{form:gamma0}) rewrites as $E\left( x\right) E\left( y\right)
m=E\left( xy\right) m$ for every $x,y\in S$ and $m\in M,$ for every $M\in R$-%
$\mathrm{Mod.}$ Thus it is equivalent to ask that $E$ is multiplicative.
Summing up $\varphi ^{\ast }$ is h-separable if and only if there is a
morphism of $R$-bimodules $E:S\rightarrow R$ which is a ring homomorphism.
This is equivalent to ask that $E:S\rightarrow R$ is a ring homomorphism
such that $E\circ \varphi =\mathrm{Id}.$

\begin{invisible}
If $E$ is $R$-bilinear, then $E\left( \varphi \left( r\right) \right)
=E\left( r1_{S}\right) =rE\left( 1_{S}\right) =r1_{R}=r$ so that $E\circ
\varphi =\mathrm{Id}.$ Conversely, if $E\circ \varphi =\mathrm{Id}$ then $%
E\left( rsr^{\prime }\right) =E\left( \varphi \left( r\right) s\varphi
\left( r^{\prime }\right) \right) =E\varphi \left( r\right) E\left( s\right)
E\varphi \left( r^{\prime }\right) =rE\left( s\right) r^{\prime }.$
\end{invisible}
\end{proof}

It is well-known that the restriction of scalars functor $\varphi _{\ast }:S$-$\mathrm{Mod}\rightarrow R$-$\mathrm{Mod}$ is the right
adjoint of the induction functor $\varphi ^{\ast }.$ Moreover $\varphi _{\ast }$ is separable if and only if $S/R$ is separable
(see \cite[Proposition 1.3]{NVV}) if and only if it admits a separability
idempotent.

We are so lead to the following definition.

\begin{definitions}
\label{def:hsepmorph}1) $S/R$ is \textbf{h-separable} if the functor $%
\varphi _{\ast }:S$-$\mathrm{Mod}\rightarrow R$-$\mathrm{Mod}$ is
h-separable.

2) A \textbf{heavy separability idempotent} (\textbf{h-separability
idempotent} for short) of $S/R$ is an element $\sum_{i}a_{i}\otimes
_{R}b_{i}\in S\otimes _{R}S$ such that $\sum_{i}a_{i}\otimes _{R}b_{i}$ is a
separability idempotent, i.e.
\begin{equation}
\sum_{i}a_{i}b_{i}=1,\qquad \sum_{i}sa_{i}\otimes
_{R}b_{i}=\sum_{i}a_{i}\otimes _{R}b_{i}s\quad \text{ for every }s\in S,
\label{hvspdmp1}
\end{equation}%
which moreover fulfills
\begin{equation}
\sum_{i,j}a_{i}\otimes _{R}b_{i}a_{j}\otimes _{R}b_{j}=\sum_{i}a_{i}\otimes
_{R}1_{S}\otimes _{R}b_{i}.  \label{hvspdmp2}
\end{equation}
\end{definitions}

\begin{remark}
\label{rem:Sweedler}Note that a h-separability idempotent $%
e:=\sum_{i}a_{i}\otimes _{R}b_{i}$ is exactly a grouplike element in the
Sweedler's coring $\mathcal{C}:=S\otimes _{R}S$ such that $se=es$ for every $%
s\in S$ i.e. which is invariant. Note that $1_{S}\otimes _{R}1_{S}$ is
always a grouplike element in $\mathcal{C}$ but it is not invariant in
general.
\end{remark}

\begin{proposition}
\label{pro:S/R}$S/R$ is h-separable if and only if it has a h-separability
idempotent.
\end{proposition}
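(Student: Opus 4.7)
The plan is to apply Corollary \ref{coro:hsepaug}(2) to the adjunction $(\varphi ^{\ast },\varphi _{\ast })$ and to unravel the abstract grouplike morphism as a concrete element of $S\otimes _{R}S$. Since $\varphi _{\ast }$ is a right adjoint, it is h-separable if and only if the comonad $(\varphi ^{\ast }\varphi _{\ast },\varphi ^{\ast }\eta \varphi _{\ast },\epsilon )$ on $S$-$\mathrm{Mod}$ admits a grouplike morphism $\delta :\mathrm{Id}\rightarrow \varphi ^{\ast }\varphi _{\ast }$. Here $\varphi ^{\ast }\varphi _{\ast }M=S\otimes _{R}M$ for $M\in S$-$\mathrm{Mod}$, with counit $\epsilon M(s\otimes _{R}m)=sm$ and comultiplication $s\otimes _{R}m\mapsto s\otimes _{R}1_{S}\otimes _{R}m$.

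For the direction $(\Rightarrow )$ I would set $e:=\delta S(1_{S})=\sum _{i}a_{i}\otimes _{R}b_{i}\in S\otimes _{R}S$ and prove that $e$ is an h-separability idempotent. The key observation is that $S$ is a generator of $S$-$\mathrm{Mod}$, so for every $M\in S$-$\mathrm{Mod}$ and $m\in M$ the $S$-linear map $f_{m}:S\rightarrow M$, $s\mapsto sm$, combined with naturality of $\delta $ at $f_{m}$, yields the formula $\delta M(m)=\sum _{i}a_{i}\otimes _{R}b_{i}m$. Specializing this formula to $M=S$ and comparing it with $\delta S(s)=s\cdot e=\sum _{i}sa_{i}\otimes _{R}b_{i}$, which comes from the $S$-linearity of $\delta S$, forces the invariance $se=es$ in $S\otimes _{R}S$. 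The counit axiom $\epsilon S\circ \delta S=\mathrm{Id}$ then produces $\sum _{i}a_{i}b_{i}=1$, so (\ref{hvspdmp1}) is verified. Finally, evaluating $\delta \delta =\varphi ^{\ast }\eta \varphi _{\ast }\circ \delta $ at $S$ on $1_{S}$ and expanding both sides via the explicit formula for $\delta $ yields exactly (\ref{hvspdmp2}) up to a symmetric relabelling of summation indices.

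Conversely, given an h-separability idempotent $e=\sum _{i}a_{i}\otimes _{R}b_{i}$, the prescription $\delta M(m):=\sum _{i}a_{i}\otimes _{R}b_{i}m$ is $S$-linear in $m$ by the invariance $se=es$, is natural in $M$ by construction, satisfies $\epsilon \circ \delta =\mathrm{Id}$ because $\sum _{i}a_{i}b_{i}=1$, and verifies $\delta \delta =\varphi ^{\ast }\eta \varphi _{\ast }\circ \delta $ precisely in view of (\ref{hvspdmp2}); then Corollary \ref{coro:hsepaug}(2) delivers the h-separability of $\varphi _{\ast }$. I do not foresee a serious obstacle: both directions are parallel element-level translations of the abstract grouplike condition, and the only delicate point is the compatibility of the two formulas for $\delta S(s)$ which pins down the invariance of $e$. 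As a side remark, by Remark \ref{rem:Sweedler} the proposition can alternatively be recast as saying that the Sweedler coring $S\otimes _{R}S$ possesses an invariant grouplike element, so it is morally a module-side incarnation of Theorem \ref{thm:indu}; nevertheless the direct route via Corollary \ref{coro:hsepaug} seems more self-contained.
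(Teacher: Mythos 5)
Your proposal is correct and follows essentially the same route as the paper: both reduce the statement via the Rafael-type theorem (Corollary \ref{coro:hsepaug} is just a restatement of Theorem \ref{thm:Rafael}(ii)) to the existence of a natural transformation $\delta M:m\mapsto \sum_{i}a_{i}\otimes _{R}b_{i}m$ determined by an invariant element $e\in S\otimes_R S$, and then check that the extra condition (\ref{form:delta0}) translates exactly into \eqref{hvspdmp2}. The only cosmetic difference is that you rederive the correspondence between such $\delta$'s and separability idempotents from the generator argument, whereas the paper cites the classical characterization.
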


\begin{proof}
We observed that $\varphi _{\ast }$ is the right adjoint of the induction functor $%
\varphi ^{\ast }:=S\otimes _{R}\left( -\right) $. Recall that $S/R$ is separable if and only if the map $%
S\otimes _{R}S\rightarrow S$ splits as an $S$-bimodule map. The splitting is
uniquely determined by a so-called separability idempotent i.e. an element $%
\sum_{i}a_{i}\otimes _{R}b_{i}\in S\otimes _{R}S$ such that \eqref{hvspdmp1} hold. Using this
element we can define $\delta :\mathrm{Id}\rightarrow \varphi ^{\ast
}\varphi _{\ast }$ such that $\epsilon \circ \delta =\mathrm{Id}$ by $\delta
M:M\rightarrow S\otimes _{R}M:m\mapsto \sum_{i}a_{i}\otimes _{R}b_{i}m.$
This natural transformation satisfies (\ref{form:delta0}) if and only if%
\begin{equation*}
\left( S\otimes _{R}\delta M\right) \circ \delta M=\left( S\otimes _{R}\eta
M\right) \circ \delta M.
\end{equation*}%
Let us compute separately the two terms of this equality on any $m\in M,$
\begin{gather*}
\left( S\otimes _{R}\delta M\right) \left( \delta M\right) \left( m\right)
=\sum_{i}a_{i}\otimes _{R}\left( \delta M\right) \left( b_{i}m\right)
=\sum_{i,j}a_{i}\otimes _{R}a_{j}\otimes
_{R}b_{j}b_{i}m\overset{\eqref{hvspdmp1}}{=}\sum_{i,j}a_{i}\otimes _{R}b_{i}a_{j}\otimes _{R}b_{j}m, \\
\left( S\otimes _{R}\eta M\right) \left( \delta M\right) \left( m\right)
=\sum_{i}a_{i}\otimes _{R}\left( \eta M\right) \left( b_{i}m\right)
=\sum_{i}a_{i}\otimes _{R}1_{S}\otimes _{R}b_{i}m.
\end{gather*}%
Thus $\delta $ satisfies (\ref{form:delta0}) if and only if \eqref{hvspdmp2}
holds true.
\end{proof}

\begin{remark}
Let $\varphi :R\rightarrow S$ be a ring homomorphism and let $\mathcal{C}%
:=S\otimes _{R}S$ be the Sweedler coring. In view of Theorem \ref{thm:indu}
and Remark \ref{rem:Sweedler} we obtain that $S/R$ is h-separable (i.e. the
functor $\varphi _{\ast }:S$-$\mathrm{Mod}\rightarrow R$-$\mathrm{Mod}$ is
h-separable) if and only if the induction functor $R:=\left( -\right)
\otimes _{S}\mathcal{C}:\mathrm{Mod}$-$S\rightarrow \mathcal{M}^{\mathcal{C}}
$ is h-separable. Note that here $\mathcal{M}^{\mathcal{C}}$ is isomorphic
(see e.g. \cite[page 252-253.]{Brz-Wisb}) to the category $\mathrm{Desc}%
\left( S/R\right) $ of descent data associated to the ring homomorphism $%
\varphi .$
\end{remark}

\begin{corollary}
\label{coro:hsepsurj}Let $\varphi :R\rightarrow S$ and $\psi :S\rightarrow T$
be ring homomorphisms.

\begin{itemize}
\item[1)] If $T/S$ and $S/R$ are h-separable so is $T/R$.

\item[2)] If $T/R$ is h-separable so is $T/S$.

\item[3)] If $S/R$ is h-separable then $T/S$ is h-separable if and only if
so is $T/R.$
\end{itemize}
\end{corollary}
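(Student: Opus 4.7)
The plan is to observe that the restriction of scalars functor is contravariantly functorial on ring homomorphisms in the natural way, and then invoke the composition result (Lemma \ref{lem:compheav}) applied to this factorization. Specifically, writing $\chi := \psi \circ \varphi : R \to T$, the restriction of scalars functor $\chi_* : T\text{-}\mathrm{Mod} \to R\text{-}\mathrm{Mod}$ equals the composite $\varphi_* \circ \psi_*$, since restricting $T$-scalars to $R$-scalars through $\chi$ factors through the intermediate step of viewing a $T$-module as an $S$-module along $\psi$ and then as an $R$-module along $\varphi$. This identification is immediate from the definition and requires no calculation.

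Given this factorization, I set $F := \psi_*$ and $G := \varphi_*$, so that $GF = \chi_* = (\psi \circ \varphi)_*$. By Definition \ref{def:hsepmorph}, the three ring extensions being h-separable translate respectively into h-separability of the functors $F$, $G$, and $GF$. Then each of the three claims is a direct instance of Lemma \ref{lem:compheav}: part (1) follows from Lemma \ref{lem:compheav}(i), part (2) follows from Lemma \ref{lem:compheav}(ii), and part (3) follows from Lemma \ref{lem:compheav}(iii).

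There is essentially no obstacle here; the only thing to verify is the functorial identity $(\psi \circ \varphi)_* = \varphi_* \circ \psi_*$, which is standard and holds strictly (not merely up to isomorphism), so the application of Lemma \ref{lem:compheav} is clean. The proof can therefore be written in just a few lines.
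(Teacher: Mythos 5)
Your proof is correct and matches the paper's argument exactly: the paper likewise deduces the corollary from the identity $(\psi\circ\varphi)_*=\varphi_*\circ\psi_*$ together with Lemma \ref{lem:compheav} and Definition \ref{def:hsepmorph}. Nothing further is needed.
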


\begin{proof}
It follows by Definition \ref{def:hsepmorph} and Lemma \ref{lem:compheav}.
\begin{invisible}
1) $T/S$ and $S/R$ are h-separable $\Leftrightarrow \psi _{\ast }$ and $%
\varphi _{\ast }$ are h-separable. By Lemma \ref{lem:compheav}, this
implies that $\varphi _{\ast }\circ \psi _{\ast }$ h-separable. Thus $\left(
\psi \varphi \right) _{\ast }$ is h-separable i.e. so is $T/R.$

2) $T/R$ is h-separable means that the functor $\left( \psi \varphi \right)
_{\ast }$ is h-separable i.e. $\varphi _{\ast }\circ \psi _{\ast }$
h-separable. By Lemma \ref{lem:compheav}, this implies that $\psi _{\ast }$
is h-separable i.e. $T/S$ is h-separable.
\end{invisible}
\end{proof}

\begin{lemma}
\label{lem:ringepim}Let $\varphi :R\rightarrow S$ be a ring homomorphism.
The following are equivalent.

\begin{enumerate}
\item The map $\varphi $ is a ring epimorphism (i.e. an epimorphism in the
category of rings);

\item the multiplication $m:S\otimes _{R}S\rightarrow S$ is invertible;

\item $1_{S}\otimes _{R}1_{S}$ is a separability idempotent for $S/R;$

\item $1_{S}\otimes _{R}1_{S}$ is a h-separability idempotent for $S/R.$
\end{enumerate}

If these equivalent conditions hold true then $S/R$ is h-separable.

Moreover $1_{S}\otimes _{R}1_{S}$ is the unique separability idempotent for $%
S/R.$
\end{lemma}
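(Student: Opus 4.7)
My strategy is to prove the cycle $(1)\Leftrightarrow (2)\Leftrightarrow (3)\Leftrightarrow (4)$ and then derive the two trailing assertions. The equivalence $(1)\Leftrightarrow (2)$ is a classical characterization of ring epimorphisms (Silver/Stenstr\"om), so I will simply invoke it with a reference rather than reprove it.

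For $(2)\Rightarrow (3)$, when $m$ is invertible, applying injectivity of $m$ to the identity $m(s\otimes_R 1_S)=s=m(1_S\otimes_R s)$ yields $s\otimes_R 1_S=1_S\otimes_R s$ for every $s\in S$, which is exactly the bimodule condition in \eqref{hvspdmp1} for the element $1_S\otimes_R 1_S$; the normalization $1_S\cdot 1_S=1$ is automatic. Conversely, for $(3)\Rightarrow (2)$, the relation $s\otimes_R 1_S=1_S\otimes_R s$ forces $s\otimes_R t=st\otimes_R 1_S$ in $S\otimes_R S$ for all $s,t\in S$, so $m$ admits the two-sided inverse $s\mapsto s\otimes_R 1_S$.

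The step $(3)\Leftrightarrow (4)$ is almost a tautology: $(4)\Rightarrow (3)$ is built into Definition \ref{def:hsepmorph}, and for the converse one specializes \eqref{hvspdmp2} to the one-term sum with $a_1=b_1=1_S$ and observes that both sides collapse to $1_S\otimes_R 1_S\otimes_R 1_S$.

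Finally, the h-separability of $S/R$ is immediate from $(4)$ via Proposition \ref{pro:S/R}. For the uniqueness assertion, any separability idempotent $\sum_i a_i\otimes_R b_i$ satisfies $m\bigl(\sum_i a_i\otimes_R b_i\bigr)=1=m(1_S\otimes_R 1_S)$, and the injectivity of $m$ provided by $(2)$ forces the two elements to coincide. I do not anticipate a real obstacle here: the only non-elementary ingredient is the classical equivalence $(1)\Leftrightarrow (2)$, and everything else amounts to carefully unwinding the definitions of (h-)separability idempotent and of the multiplication map on $S\otimes_R S$.
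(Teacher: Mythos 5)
Your proof is correct and follows essentially the same route as the paper's: both rest on the classical Silver--Stenstr\"om characterization of ring epimorphisms, the observation that $1_S\otimes_R 1_S$ automatically satisfies \eqref{hvspdmp2} (so that $(3)\Leftrightarrow(4)$ is immediate), and Proposition \ref{pro:S/R} for the h-separability of $S/R$. The only (harmless) differences are that you verify $(2)\Leftrightarrow(3)$ directly where the paper instead cites Stenstr\"om for $(1)\Leftrightarrow(3)$, and that your uniqueness argument uses the injectivity of $m$ supplied by $(2)$, whereas the paper computes $\sum_i a_i\otimes_R b_i=\sum_i a_i\left( 1_S\otimes_R 1_S\right) b_i=\left( 1_S\otimes_R 1_S\right)\sum_i a_i b_i=1_S\otimes_R 1_S$ directly from the invariance of $1_S\otimes_R 1_S$; both are valid.
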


\begin{proof}
$\left( 1\right) \Leftrightarrow \left( 2\right) $ follows by \cite[%
Proposition XI.1.2 page 225]{Stenstroem}.

$\left( 1\right) \Leftrightarrow \left( 3\right) $ follows by \cite[%
Proposition XI.1.1 page 226]{Stenstroem}.

\begin{invisible}
If $m$ is invertible, from $m\left( s\otimes _{R}1_{S}\right) =s=m\left(
1_{S}\otimes _{R}s\right) $ and the injectivity of $m$ we deduce the claimed
equality. Conversely if the equality holds let $h:S\rightarrow S\otimes
_{R}S:s\mapsto s\otimes _{R}1_{S}.$ Then $m\circ h=\mathrm{Id}$ and
\begin{equation*}
\left( h\circ m\right) \left( s^{\prime }\otimes _{R}s\right) =s^{\prime
}s\otimes _{R}1_{S}=\left( m\otimes _{R}S\right) \left( s^{\prime }\otimes
_{R}s\otimes _{R}1_{S}\right) =\left( m\otimes _{R}S\right) \left( s^{\prime
}\otimes _{R}1_{S}\otimes _{R}s\right) =s^{\prime }\otimes _{R}s
\end{equation*}%
and hence $h\circ m=\mathrm{Id}.$ Hence $m$ is invertible.
\end{invisible}

$\left( 3\right) \Leftrightarrow \left( 4\right) $ depends on the fact that $%
1_{S}\otimes _{R}1_{S}$ always fulfills (\ref{hvspdmp2}).

By Proposition \ref{pro:S/R}, $\left( 4\right) $ implies that $S/R$ is
h-separable.

Let us check the last part of the statement. If $\sum_{i}a_{i}\otimes
_{R}b_{i}$ is another separability idempotent, we get
\begin{equation*}
\sum_{i}a_{i}\otimes _{R}b_{i}=\sum_{i}a_{i}1_{S}\otimes
_{R}1_{S}b_{i}=1_{S}\otimes _{R}1_{S}\sum_{i}a_{i}b_{i}\overset{\eqref{hvspdmp1}}{=}1_{S}\otimes
_{R}1_{S}.
\end{equation*}
\end{proof}

\begin{example}
\label{ex:multiso}We now give examples of ring epimorphisms $\varphi
:R\rightarrow S$.

1) Let $S$ be a multiplicative closed subset of a commutative ring $R.$ Then
the canonical map $\varphi :R\rightarrow S^{-1}R$ is a ring epimorphism,
c.f. \cite[Proposition 3.1]{AtMac}. More generally we can consider a perfect
right localization of $R$ as in \cite[page 229]{Stenstroem}.

\begin{invisible}
The case $\left( S^{-1}R\right) /R$ where $S$ is a multiplicative closed
subset of a commutative ring $R.$ In fact
\begin{equation*}
\frac{r}{s}\otimes _{R}1=\frac{r}{s}\otimes _{R}\frac{s}{s}=\frac{s}{s}%
\otimes _{R}\frac{r}{s}=1\otimes _{R}\frac{e}{s}.
\end{equation*}%
In particular $\mathbb{Q}/\mathbb{Z}$ is of this form.
\end{invisible}

2) Consider the ring of matrices $\mathrm{M}_{n}\left( R\right) $ and the
ring $\mathrm{T}_{n}\left( R\right) $ of $n\times n$ upper triangular
matrices over a ring $R.$ Then the inclusion $\varphi :\mathrm{T}_{n}\left(
R\right) \rightarrow \mathrm{M}_{n}\left( R\right) $ is a ring epimorphism.
In fact, given ring homomorphisms $\alpha ,\beta :\mathrm{M}_{n}\left(
R\right) \rightarrow B$ that coincide on $\mathrm{T}_{n}\left( R\right) $
then they coincide on all matrices. To see this we first check that $\alpha
\left( E_{ij}\right) =\beta \left( E_{ij}\right) $ for all $i>j$,%
\begin{eqnarray*}
\alpha \left( E_{ij}\right) &=&\alpha \left( E_{ij}E_{jj}\right) =\alpha
\left( E_{ij}\right) \alpha \left( E_{jj}\right) =\alpha \left(
E_{ij}\right) \beta \left( E_{jj}\right) =\alpha \left( E_{ij}\right) \beta
\left( E_{ji}E_{ij}\right) \\
&=&\alpha \left( E_{ij}\right) \beta \left( E_{ji}\right) \beta \left(
E_{ij}\right) =\alpha \left( E_{ij}\right) \alpha \left( E_{ji}\right) \beta
\left( E_{ij}\right) =\alpha \left( E_{ij}E_{ji}\right) \beta \left(
E_{ij}\right) \\
&=&\alpha \left( E_{ii}\right) \beta \left( E_{ij}\right) =\beta \left(
E_{ii}\right) \beta \left( E_{ij}\right) =\beta \left( E_{ii}E_{ij}\right)
=\beta \left( E_{ij}\right) .
\end{eqnarray*}%
Thus $\alpha \left( E_{ij}\right) =\beta \left( E_{ij}\right) $ for every $%
i,j.$ Now, given $r\in R$ we have%
\begin{equation*}
\alpha \left( rE_{ij}\right) =\alpha \left( rE_{ii}E_{ij}\right) =\alpha
\left( rE_{ii}\right) \alpha \left( E_{ij}\right) =\beta \left(
rE_{ii}\right) \beta \left( E_{ij}\right) =\beta \left( rE_{ii}E_{ij}\right)
=\beta \left( rE_{ij}\right) .
\end{equation*}

As a consequence $\alpha \left( M\right) =\beta \left( M\right) $ for all $%
M\in \mathrm{M}_{n}\left( R\right) $ as desired.

3) Any surjective ring homomorphism $\varphi :R\rightarrow S$ is trivially a
ring epimorphism.
\end{example}

\begin{remark}
A kind of dual to Lemma \ref{lem:ringepim}, establishes that a $\Bbbk $%
-coalgebra homomorphism $\varphi :C\rightarrow D$ is a coalgebra epimorphism
if and only if the induced functor $\mathcal{M}^{C}\rightarrow \mathcal{M}%
^{D}$ is full, see \cite[Theorem 3.5]{NT}. Since this functor is always
faithful, by Lemma \ref{lem:ffhsep} it is in this case h-separable.
\end{remark}

\begin{proposition}
\label{pro:hsepext}Let $\varphi :R\rightarrow S$ be a ring homomorphism.
Then $S/R$ is h-separable if and only if $S/\varphi \left( R\right) $ is
h-separable.
\end{proposition}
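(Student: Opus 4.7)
The plan is to split $\varphi$ through its image and reduce the statement to a direct application of Corollary \ref{coro:hsepsurj}(3). Explicitly, set $R' := \varphi(R)$ and factor $\varphi = \iota \circ \pi$, where $\pi : R \twoheadrightarrow R'$ is the canonical surjection onto the image and $\iota : R' \hookrightarrow S$ is the inclusion.

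The first step is to observe that $\pi$, being surjective, is trivially a ring epimorphism (Example \ref{ex:multiso}(3)). Applying Lemma \ref{lem:ringepim} to $\pi$ then shows that $R'/R$ is h-separable, with $1_{R'} \otimes _R 1_{R'}$ serving as the (unique) h-separability idempotent.

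The second and final step is to apply Corollary \ref{coro:hsepsurj}(3) to the pair of ring homomorphisms $\pi : R \to R'$ and $\iota : R' \to S$, whose composition is $\varphi$. Matching the notation of the corollary, the inner ring is $R$, the middle ring is $R'$, and the outer ring is $S$, so that the hypothesis of the corollary becomes the h-separability of $R'/R$ (already established), while its conclusion reads: $S/R'$ is h-separable if and only if $S/R$ is h-separable. Since $R' = \varphi(R)$, this is precisely the claim.

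I do not anticipate any real obstacle, as the proposition is essentially a formal consequence of two already-proven results. The only delicate point is to keep track of the labeling of the rings when invoking Corollary \ref{coro:hsepsurj}(3), to make sure that the assumption of h-separability falls on the inner extension $R'/R$ (where it holds for free from Lemma \ref{lem:ringepim}) rather than on $S/R'$ (which is what we want to compare with $S/R$).
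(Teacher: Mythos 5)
Your proof is correct and is essentially identical to the paper's own argument: both factor $\varphi$ through its image, use Lemma \ref{lem:ringepim} (via surjectivity, hence ring epimorphism) to get that $\varphi(R)/R$ is h-separable, and then invoke Corollary \ref{coro:hsepsurj}(3). Your care with the labeling of the three rings when applying the corollary is exactly the right point to check, and you have it right.
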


\begin{proof}
Write $\varphi =i\circ \overline{\varphi }$ where $i:\varphi \left( R\right)
\rightarrow S$ is the canonical inclusion and $\overline{\varphi }%
:R\rightarrow \varphi \left( R\right) $ is the corestriction of $\varphi $
to its image $\varphi \left( R\right) .$ By Lemma \ref{lem:ringepim}, we
have that $\varphi \left( R\right) /R$ is h-separable.

By Corollary \ref{coro:hsepsurj}, $S/R$ is h-separable if and only if $%
S/\varphi \left( R\right) $ is h-separable.
\end{proof}

It is well-known that the ring of matrices is separable, see e.g. \cite[%
Example II, page 41]{DI}. In the following result we show that however it is
never h-separable.

\begin{lemma}
\label{lem:matrix}Let $R$ be a ring and $S:=\mathrm{M}_{n}\left(
R\right) $. If $S/R$ is h-separable, then either $n=1$ or $R=0.$
\end{lemma}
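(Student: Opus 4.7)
The plan is to invoke Proposition~\ref{pro:S/R} and argue by contradiction: assume $S=\mathrm{M}_n(R)$ has an h-separability idempotent $e=\sum_i a_i\otimes_R b_i$, and extract from it a family of elements of $Z(R)$ satisfying ``matrix-unit'' relations that collapse as soon as $n\geq 2$.

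For each pair $(k,l)\in\{1,\dots,n\}^2$, I would form
\[
u_{kl}\;:=\;\sum_i a_i E_{kl} b_i\;\in\;S
\]
by applying to $e$ the map $\mu_{kl}\colon a\otimes_R b\mapsto a E_{kl} b$. This map is $R$-balanced because $r E_{kl}=E_{kl} r$ as matrices in $\mathrm{M}_n(R)$ for every $r\in R$ (identifying $r$ with $rI$). Feeding the invariance relation \eqref{hvspdmp1} through $\mu_{kl}$ yields $s\, u_{kl}=u_{kl}\,s$ for every $s\in S$, so $u_{kl}\in Z(S)$. A direct calculation shows $Z(\mathrm{M}_n(R))=Z(R)\cdot 1_S$ for $n\ge 2$, so I may write $u_{kl}=r_{kl}\cdot 1_S$ with $r_{kl}\in Z(R)$.

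Two identities then fall out. First, from $\sum_k E_{kk}=1_S$ combined with $\sum_i a_i b_i=1_S$,
\[
\sum_{k=1}^n r_{kk}\;=\;1_R.
\]
Second, applying to both sides of \eqref{hvspdmp2} the (well-defined $R$-trilinear) functional $a\otimes_R b\otimes_R c\mapsto a E_{kl} b E_{mn} c$, the left-hand side factors as $u_{kl} u_{mn}=r_{kl}r_{mn}\cdot 1_S$ while the right-hand side, using $E_{kl}E_{mn}=\delta_{lm}E_{kn}$, becomes $\delta_{lm}\,u_{kn}=\delta_{lm}\,r_{kn}\cdot 1_S$. Thus
\[
r_{kl}\,r_{mn}\;=\;\delta_{lm}\,r_{kn}\qquad\text{for all } k,l,m,n,
\]
so the $r_{kl}$ behave like a system of $n\times n$ matrix units, but inside the \emph{commutative} ring $Z(R)$.

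To conclude, assume $n\ge 2$ and pick $k\neq l$. The relation gives $r_{kl}\,r_{kk}=0$ whereas $r_{kk}\,r_{kl}=r_{kl}$; commutativity of $Z(R)$ forces $r_{kl}=0$. Then $r_{kk}=r_{kl}\,r_{lk}=0$ for every $k$, whence $1_R=\sum_k r_{kk}=0$ and $R=0$. The only technical care needed is checking the $R$-balancedness of the insertion maps $\mu_{kl}$ and their trilinear analogues, which reduces to the fact that each matrix unit $E_{kl}$ commutes with $\varphi(R)=R\cdot 1_S$; the real content is then the tension between the non-commutative matrix-unit relations and the commutativity of $Z(R)$, which is what makes the h-separability condition so restrictive compared to ordinary separability.
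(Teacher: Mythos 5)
Your proof is correct, and it reaches the same contradiction as the paper but by a noticeably different route. The paper expands the h-separability idempotent $e$ in the basis $\left\{E_{i,j}\otimes_R E_{s,t}\right\}$ of $S\otimes_R S$ as a free left $R$-module, uses the invariance relation $E_{a,b}e=eE_{a,b}$ to whittle the coefficients down to a family $r_j^s\in R$ with $e=\sum_{j,s}r_j^s\sum_i E_{i,j}\otimes_R E_{s,i}$, and then compares coefficients in \eqref{hvspdmp2} to get $r_j^s r_{j'}^{s'}=\delta_{s,j'}r_j^{s'}$, from which $1_R=0$ follows for $n\geq 2$. You instead avoid coordinatizing $e$ altogether: you push $e$ and both sides of \eqref{hvspdmp2} through the balanced evaluation maps $a\otimes_R b\mapsto aE_{kl}b$ (well defined precisely because each $E_{kl}$ commutes with $\varphi(R)=R\cdot 1_S$), land in $Z(S)=Z(R)\cdot 1_S$ via \eqref{hvspdmp1}, and obtain the clean matrix-unit relations $r_{kl}r_{pq}=\delta_{lp}r_{kq}$ inside the commutative ring $Z(R)$, which together with $\sum_k r_{kk}=1_R$ collapse for $n\geq 2$. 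What your version buys is a shorter and more conceptual argument -- the obstruction is exhibited as ``a system of $n\times n$ matrix units in a commutative ring forces $1=0$'' -- at the modest cost of the auxiliary facts that $Z(\mathrm{M}_n(R))=Z(R)\cdot 1_S$ and that the bilinear and trilinear insertion maps are $R$-balanced; the paper's version is more computational but uses only the freeness of $S\otimes_R S$ over $R$. (A small notational caution: your use of $m,n$ as matrix-unit indices clashes with $n$ being the size of the matrices; rename them before writing this up.)
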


\begin{proof}
By Proposition \ref{pro:S/R}, $S/R$ admits a h-separability idempotent $%
e=\sum_{i,j,s,t}r_{i,j}^{s,t}E_{i,j}\otimes _{R}E_{s,t}$ where $%
r_{i,j}^{s,t}\in R$ and $E_{i,j}$ is the canonical matrix having $1$ in the
entry $\left( i,j\right) $ and zero elsewhere ($e$ can be written in the
given form since the tensor product is over $R$ and the $E_{i,j}$'s are $R$%
-invariant). Then%
\begin{eqnarray*}
E_{a,b}e &=&\sum_{i,j,s,t}E_{a,b}r_{i,j}^{s,t}E_{i,j}\otimes
_{R}E_{s,t}=\sum_{i,j,s,t}r_{i,j}^{s,t}E_{a,b}E_{i,j}\otimes
_{R}E_{s,t}=\sum_{j,s,t}r_{b,j}^{s,t}E_{a,j}\otimes _{R}E_{s,t}, \\
eE_{a,b} &=&\sum_{i,j,s,t}r_{i,j}^{s,t}E_{i,j}\otimes
_{R}E_{s,t}E_{a,b}=\sum_{i,j,s}r_{i,j}^{s,a}E_{i,j}\otimes _{R}E_{s,b}.
\end{eqnarray*}%
Note that the elements $E_{i,j}\otimes _{R}E_{i^{\prime },j^{\prime }}$'s
form a basis of $S\otimes _{R}S$ as a free left $R$-module. Thus, the
equalities $E_{a,b}e=eE_{a,b}$ implies $r_{b,j}^{s,t}=0$ for $t\neq b.$
Moreover if $t=b$ then $r_{b,j}^{s,t}=r_{a,j}^{s,a}.$ Thus $%
r_{b,j}^{s,t}=\delta _{t,b}r_{a,j}^{s,a}$ for every $a,b,j,s.$ Thus, if we
set $r_{j}^{s}:=r_{1,j}^{s,1},$ we obtain $r_{b,j}^{s,t}=\delta
_{t,b}r_{1,j}^{s,1}=\delta _{t,b}r_{j}^{s}.$ We can now rewrite
\begin{equation*}
e=\sum_{i,j,s,t}r_{i,j}^{s,t}E_{i,j}\otimes
_{R}E_{s,t}=\sum_{i,j,s}r_{j}^{s}E_{i,j}\otimes
_{R}E_{s,i}=\sum_{j,s}r_{j}^{s}\sum_{i}E_{i,j}\otimes _{R}E_{s,i}.
\end{equation*}%
Now%
\begin{equation*}
\sum_{i}E_{i,i}=1=m\left( e\right)
=\sum_{j,s}r_{j}^{s}\sum_{i}E_{i,j}E_{s,i}=\sum_{j}r_{j}^{j}\sum_{i}E_{i,i}
\end{equation*}%
so that
\begin{equation}\label{form:rjj}
\sum_{j}r_{j}^{j}=1_{R}.
\end{equation}%
From (\ref{hvspdmp2}) and the fact that the $E_{i,j}$'s are $R$-invariant we
deduce that%
\begin{equation*}
\sum_{j,s}\sum_{j^{\prime },s^{\prime }}r_{j}^{s}r_{j^{\prime }}^{s^{\prime
}}\sum_{i}\sum_{i^{\prime }}E_{i,j}\otimes _{R}E_{s,i}E_{i^{\prime
},j^{\prime }}\otimes _{R}E_{s^{\prime },i^{\prime
}}=\sum_{j,s}r_{j}^{s}\sum_{i}E_{i,j}\otimes _{R}1\otimes _{R}E_{s,i}
\end{equation*}%
i.e.%
\begin{equation*}
\sum_{j,s}\sum_{j^{\prime },s^{\prime }}r_{j}^{s}r_{j^{\prime }}^{s^{\prime
}}\sum_{i}E_{i,j}\otimes _{R}E_{s,j^{\prime }}\otimes _{R}E_{s^{\prime
},i}=\sum_{j,s}r_{j}^{s}\sum_{j^{\prime },i}E_{i,j}\otimes _{R}E_{j^{\prime
},j^{\prime }}\otimes _{R}E_{s,i}.
\end{equation*}%
Equivalently for all $i,j$ we have
\begin{equation*}
\sum_{s}\sum_{j^{\prime },s^{\prime }}r_{j}^{s}r_{j^{\prime }}^{s^{\prime
}}E_{s,j^{\prime }}\otimes _{R}E_{s^{\prime
},i}=\sum_{s}r_{j}^{s}\sum_{j^{\prime }}E_{j^{\prime },j^{\prime }}\otimes
_{R}E_{s,i}
\end{equation*}%
i.e., replacing $s$ with $s^{\prime }$ in the second term,%
\begin{equation*}
\sum_{s}\sum_{j^{\prime },s^{\prime }}r_{j}^{s}r_{j^{\prime }}^{s^{\prime
}}E_{s,j^{\prime }}\otimes _{R}E_{s^{\prime },i}=\sum_{s^{\prime
}}r_{j}^{s^{\prime }}\sum_{j^{\prime }}E_{j^{\prime },j^{\prime }}\otimes
_{R}E_{s^{\prime },i}.
\end{equation*}%
Thus for every $s^{\prime },i,j$%
\begin{equation*}
\sum_{s}\sum_{j^{\prime }}r_{j}^{s}r_{j^{\prime }}^{s^{\prime
}}E_{s,j^{\prime }}=r_{j}^{s^{\prime }}\sum_{j^{\prime }}E_{j^{\prime
},j^{\prime }}.
\end{equation*}%
From this equality we deduce $r_{j}^{s}r_{j^{\prime }}^{s^{\prime }}=\delta
_{s,j^{\prime }}r_{j}^{s^{\prime }}$ for all $s,j,s^{\prime },j^{\prime }.$
Now%
\begin{equation*}
r_{j}^{s^{\prime }}=\sum_{j^{\prime }}\delta _{s,j^{\prime
}}r_{j}^{s^{\prime }}=\sum_{j^{\prime }}r_{j}^{s}r_{j^{\prime }}^{j^{\prime
}}=r_{j}^{s}\sum_{j^{\prime }}r_{j^{\prime }}^{j^{\prime }}\overset{\eqref{form:rjj}}{=}r_{j}^{s}\cdot
1_{R}=r_{j}^{s}
\end{equation*}%
so that we can set $r_{j}:=r_{j}^{1}$ and we get $r_{j}^{s}=r_{j}$ for each $%
s,j.$ Hence the equality $r_{j}^{s}r_{j^{\prime }}^{s^{\prime }}=\delta
_{s,j^{\prime }}r_{j}^{s^{\prime }}$ rewrites as $r_{j}r_{j^{\prime
}}=\delta _{s,j^{\prime }}r_{j}$ for all $s,j,j^{\prime }.$

If $n\geq 2,$ then for every $j^{\prime }$ there is always $s\neq j^{\prime }\
so$ that we obtain $r_{j}r_{j^{\prime }}=0$ for all $j,j^{\prime }.$ Now $%
0=\sum_{j}\sum_{j^{\prime }}r_{j}r_{j^{\prime
}}=\sum_{j}r_{j}\sum_{j^{\prime }}r_{j^{\prime }}=1_{R}\cdot 1_{R}=1_{R}$, a
contradiction.
\end{proof}

\subsection{Heavily separable algebras}

\begin{claim}
Let $R$ be a commutative ring, let $S$ be a ring and let $Z\left( S\right) $
be its center. We recall that a $S$ is said to be an $R$-algebra, or that $S$
is an algebra over $R$, if there is a unital ring homomorphism $\varphi
:R\rightarrow S$ such that $\varphi \left( R\right) \subseteq Z\left(
S\right) $. In this case we set%
\begin{equation*}
r\cdot s=\varphi \left( r\right) \cdot s\text{ for every }r\in R\text{ and }%
s\in S.
\end{equation*}%
Since ${Im}\left( \varphi \right) \subseteq Z\left( S\right) ,$ we have $%
r\cdot s=s\cdot r$ for every $r\in R$ and $s\in S$ and
\begin{equation*}
r\cdot 1_{S}=\varphi \left( r\right) \cdot 1_{S}=\varphi \left( r\right) \cdot
\varphi \left( 1_{R}\right) =\varphi \left( r\cdot 1_{R}\right) =\varphi \left(
r\right) \text{ for every }r\in R\text{ so that }R1_{S}={Im}\left( \varphi
\right) \subseteq Z\left( S\right) .
\end{equation*}
\end{claim}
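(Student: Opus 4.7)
The statement is essentially an unpacking of definitions, so the plan is to verify each asserted equality by invoking only the unitality of $\varphi$ and the hypothesis $\varphi(R)\subseteq Z(S)$. I would proceed in three short steps, in the order the assertions are made.

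First, for the centrality assertion $r\cdot s=s\cdot r$, I would rewrite the left hand side using the definition $r\cdot s=\varphi(r)s$. Since $\varphi(r)\in \varphi(R)\subseteq Z(S)$ by hypothesis, $\varphi(r)$ commutes with every element of $S$, so $\varphi(r)s=s\varphi(r)=s\cdot r$. This step is immediate and uses only the centrality assumption.

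Second, for the equality $r\cdot 1_S=\varphi(r)$, I would compute both sides from the definition. Writing $r\cdot 1_S=\varphi(r)\cdot 1_S=\varphi(r)\cdot \varphi(1_R)$ uses that $\varphi$ is a unital ring homomorphism, so $\varphi(1_R)=1_S$ and hence $\varphi(r)\cdot 1_S=\varphi(r)$. Alternatively, one may read this through $\varphi$ as a multiplicative map: $\varphi(r)=\varphi(r\cdot 1_R)=\varphi(r)\varphi(1_R)$. Either route is a one-line verification.

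Third, for the inclusion $R1_S=\mathrm{Im}(\varphi)\subseteq Z(S)$, I would simply observe that the set $R1_S=\{r\cdot 1_S\mid r\in R\}$ equals $\{\varphi(r)\mid r\in R\}=\mathrm{Im}(\varphi)$ by the previous step, and then the inclusion into $Z(S)$ is precisely the defining hypothesis of an $R$-algebra. There is no real obstacle here; the only thing one has to be careful about is to keep the two notations $r\cdot s$ and $\varphi(r)s$ consistent, and to distinguish $1_R$ from $1_S$ so that the unitality of $\varphi$ is invoked explicitly at the one place it is needed.
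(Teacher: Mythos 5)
Your verification is correct and follows exactly the same line as the paper, which in fact embeds these one-line computations directly in the statement of the claim rather than giving a separate proof: centrality of $\varphi(r)$ gives $r\cdot s=s\cdot r$, unitality of $\varphi$ gives $r\cdot 1_S=\varphi(r)$, and the inclusion $R1_S=\mathrm{Im}(\varphi)\subseteq Z(S)$ is then the defining hypothesis. Nothing further is needed.
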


\begin{theorem}
\label{thm:alg}Let $S$ be and $R$-algebra. Then $S/R$ is h-separable if and
only if the canonical map $\varphi :R\rightarrow S$ is a ring epimorphism.
Moreover if one of these conditions holds, then $S$ is commutative.
\end{theorem}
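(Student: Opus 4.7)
The plan is to exploit the $R$-algebra hypothesis $\varphi(R)\subseteq Z(S)$ to endow $S\otimes_R S$ with the symmetric ring structure $(a\otimes_R b)(c\otimes_R d)=ac\otimes_R bd$ (well defined because $R$ is central in $S$), under which the swap $\tau\colon S\otimes_R S\to S\otimes_R S$, $\tau(a\otimes_R b)=b\otimes_R a$, is a ring automorphism. The implication $\varphi$ ring epimorphism $\Rightarrow$ $S/R$ h-separable is immediate from Lemma~\ref{lem:ringepim}, so the substance is in the converse.

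Assume $S/R$ is h-separable and, by Proposition~\ref{pro:S/R}, fix an h-separability idempotent $e=\sum_{i}a_{i}\otimes_R b_{i}$. The strategy is to show $e=1_S\otimes_R 1_S$, after which Lemma~\ref{lem:ringepim} forces $\varphi$ to be a ring epimorphism. First, applying to \eqref{hvspdmp2} the $R$-linear map $S\otimes_R S\otimes_R S\to S\otimes_R S$ sending $a\otimes b\otimes c\mapsto ac\otimes b$ (well defined because $R$ is central in $S$) transforms the equation into $\sum_{i,j}a_ib_j\otimes_R b_ia_j=\sum_{i}a_ib_i\otimes_R 1_S=1_S\otimes_R 1_S$; the left-hand side is exactly $e\cdot\tau(e)$, giving $e\cdot\tau(e)=1_S\otimes_R 1_S$, and applying $\tau$ also yields $\tau(e)\cdot e=1_S\otimes_R 1_S$. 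Second, applying to \eqref{hvspdmp2} the $R$-linear map $a\otimes b\otimes c\mapsto a\otimes cb$ transforms it into $\sum_{i,j}a_i\otimes_R b_jb_ia_j=\sum_{i}a_i\otimes_R b_i=e$; the left-hand side rewrites as $\sum_{j}(1_S\otimes_R b_j)\cdot e\cdot(1_S\otimes_R a_j)$, and the invariance condition from \eqref{hvspdmp1}, in the form $e\cdot(1_S\otimes_R a_j)=(a_j\otimes_R 1_S)\cdot e$, folds the expression into $\sum_{j}(a_j\otimes_R b_j)\cdot e=e\cdot e$. Hence $e^{2}=e$, and multiplying $e\,(e-1_S\otimes_R 1_S)=0$ on the left by $\tau(e)$ yields $e=1_S\otimes_R 1_S$.

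For the ``moreover'' clause, with $\varphi$ now a ring epimorphism the two $R$-linear maps $\alpha,\beta\colon S\to S\otimes_R S$ given by $\alpha(s)=s\otimes_R 1_S$ and $\beta(s)=1_S\otimes_R s$ are ring homomorphisms for the symmetric ring structure that agree on $\varphi(R)$, so $\alpha=\beta$; equivalently $s\otimes_R 1_S=1_S\otimes_R s$ for every $s\in S$. Using this identity in both tensor factors, $s\otimes_R t=(s\otimes_R 1_S)(1_S\otimes_R t)=(1_S\otimes_R s)(t\otimes_R 1_S)=t\otimes_R s$ for all $s,t\in S$, and applying the multiplication $\mu$, invertible by Lemma~\ref{lem:ringepim}, gives $st=ts$, so $S$ is commutative.

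The main obstacle is spotting the right $R$-linear contraction to apply to \eqref{hvspdmp2} in order to extract the idempotency $e^{2}=e$: the key insight is that the map $a\otimes b\otimes c\mapsto a\otimes cb$, combined with the $S$-bimodule invariance from \eqref{hvspdmp1}, collapses the middle tensor factor back into a second copy of $e$, after which the ring-theoretic fact that an idempotent unit is the identity does the rest.
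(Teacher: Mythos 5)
Your proof is correct, and its core is organized differently from the paper's. The first contraction you apply to \eqref{hvspdmp2} (the map $a\otimes b\otimes c\mapsto ac\otimes b$) produces exactly the identity $\sum_{i,j}a_ib_j\otimes_R b_ia_j=1_S\otimes_R 1_S$ that the paper records as (\ref{form:star}); from there the two arguments diverge. The paper first proves commutativity of $S$ by an explicit element computation showing $\sum_t a_t s b_t=s\in Z(S)$ for every $s$, and only then uses commutativity together with \eqref{hvspdmp1} to collapse $e$ to $1_S\otimes_R 1_S$. You postpone commutativity entirely: a second contraction $a\otimes b\otimes c\mapsto a\otimes cb$ of \eqref{hvspdmp2}, combined with the invariance condition read inside the ring $S\otimes_R S$ as $(s\otimes_R 1_S)e=e(1_S\otimes_R s)$, gives $e^2=e$, while the first contraction exhibits $\tau(e)$ as a two-sided inverse of $e$; an idempotent unit is the identity, so $e=1_S\otimes_R 1_S$. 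Commutativity is then deduced afterwards from the epimorphism property via $s\otimes_R 1_S=1_S\otimes_R s$ (for this last step you do not actually need the invertibility of the multiplication map: applying $m$ to $s\otimes_R t=t\otimes_R s$ already gives $st=ts$). Your route is arguably more conceptual, replacing the paper's element-level computation by ring-theoretic manipulations inside $S\otimes_R S$; both versions rest on the same essential input, namely that $\varphi(R)\subseteq Z(S)$ makes the symmetric multiplication, the swap $\tau$, and your two contraction maps well defined.
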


\begin{proof}
$\left( \Rightarrow \right) .$ Let $\sum_{i}a_{i}\otimes _{R}b_{i}$ be an
h-separability idempotent. Since $\varphi \left( R\right) \subseteq Z\left(
S\right) ,$ we get that the map $\tau :A\otimes _{R}A\rightarrow A\otimes
_{R}A,\tau \left( a\otimes _{R}b\right) =b\otimes _{R}a,$ is well-defined
and left $R$-linear. Hence we can apply $A\otimes _{R}\tau $ on both sides
of (\ref{hvspdmp2}) to get $\sum_{i,j}a_{i}\otimes _{R}b_{j}\otimes
_{R}b_{i}a_{j}=\sum_{i}a_{i}\otimes _{R}b_{i}\otimes _{R}1_{S}.$ By
multiplying, we obtain $\sum_{i,j}a_{i}b_{j}\otimes
_{R}b_{i}a_{j}=\sum_{i}a_{i}b_{i}\otimes _{R}1_{S}.$ By (\ref{hvspdmp1}), we
get
\begin{equation}
\sum_{i,j}a_{i}b_{j}\otimes _{R}b_{i}a_{j}=1_{S}\otimes _{R}1_{S}.
\label{form:star}
\end{equation}%
By (\ref{hvspdmp1}), we get that $\sum_{t}a_{t}sb_{t}\in Z\left( S\right) $, for all $s\in S$.
Using this fact we have%
\begin{eqnarray*}
s &=&1_{S}\cdot 1_{S}\cdot s\overset{(\ref{form:star})}{=}%
\sum_{i,j}a_{i}b_{j}b_{i}a_{j}s=\sum_{i,j}a_{i}\left( b_{j}\right)
b_{i}\left( a_{j}\right) s\left( 1_{S}\right) \\
&\overset{(\ref{hvspdmp2})}{=}&\sum_{i,j,t}a_{i}b_{j}b_{i}\left(
a_{t}sb_{t}\right) a_{j}=\sum_{i,j,t}a_{i}b_{j}b_{i}a_{j}\left(
a_{t}sb_{t}\right) \overset{(\ref{form:star})}{=}\sum_{t}a_{t}sb_{t}\in
Z\left( S\right) .
\end{eqnarray*}%
We have so proved that $S\subseteq Z\left( S\right) $ and hence $S$ is
commutative. Now, we compute%
\begin{equation*}
\sum_{i}a_{i}\otimes _{R}b_{i}\overset{(\ref{hvspdmp1})}{=}%
\sum_{i,j}a_{i}a_{j}b_{j}\otimes _{R}b_{i}\overset{S=Z\left( S\right) }{=}%
\sum_{i,j}a_{j}a_{i}b_{j}\otimes _{R}b_{i}\overset{(\ref{hvspdmp1})}{=}%
\sum_{i,j}a_{i}b_{j}\otimes _{R}b_{i}a_{j}
\end{equation*}%
so that $\sum_{i}a_{i}\otimes _{R}b_{i}=1_{S}\otimes _{R}1_{S}$ by (\ref%
{form:star}). We conclude by Lemma \ref{lem:ringepim}.

$\left( \Leftarrow \right) $ It follows by Lemma \ref{lem:ringepim}.
\end{proof}

The following result establishes that there is no non-trivial h-separable
algebra over a field $\Bbbk $.

\begin{proposition}
\label{Pro:hsepoverfield}Let $A$ be a h-separable algebra over a field $%
\Bbbk $. Then either $A=\Bbbk $ or $A=0.$
\end{proposition}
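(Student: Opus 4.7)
The plan is to leverage Theorem \ref{thm:alg} to reduce the statement to a purely linear-algebraic observation about ring epimorphisms out of a field. If $A=0$ we are done, so assume $A\neq 0$; I want to show $A=\Bbbk$.

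First, since $A$ is an $h$-separable $\Bbbk$-algebra and $\Bbbk$ is in particular a commutative ring with $\varphi(\Bbbk)\subseteq Z(A)$, Theorem \ref{thm:alg} applies and yields that the structure map $\varphi:\Bbbk\to A$ is a ring epimorphism (and, although we will not need it, that $A$ is commutative). By Lemma \ref{lem:ringepim}, this is equivalent to saying that the multiplication $m:A\otimes_{\Bbbk}A\to A$ is a $\Bbbk$-linear isomorphism, and that $1_{A}\otimes_{\Bbbk}1_{A}$ is the separability idempotent. Since $A\neq 0$, we have $\varphi(1_{\Bbbk})=1_{A}\neq 0$, so $\varphi$ is injective (a ring homomorphism out of a field is zero or injective). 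Thus I may identify $\Bbbk$ with $\varphi(\Bbbk)\subseteq A$ and choose a $\Bbbk$-vector space complement $V$ of $\Bbbk\cdot 1_{A}$ in $A$, giving a decomposition $A=\Bbbk\cdot 1_{A}\oplus V$ as $\Bbbk$-vector spaces.

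The core step is then to exploit this decomposition to show $V=0$. Completing $\{1_{A}\}$ to a basis of $A$ by a basis of $V$, the tensor products of these basis elements form a $\Bbbk$-basis of $A\otimes_{\Bbbk}A$. From this it is immediate that for every $v\in V$ the element $v\otimes_{\Bbbk}1_{A}-1_{A}\otimes_{\Bbbk}v$ is nonzero in $A\otimes_{\Bbbk}A$ whenever $v\neq 0$. On the other hand, $m\bigl(v\otimes_{\Bbbk}1_{A}-1_{A}\otimes_{\Bbbk}v\bigr)=v-v=0$. Since $m$ is injective, this forces $v=0$, hence $V=0$ and $A=\Bbbk\cdot 1_{A}\cong\Bbbk$.

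The only subtle point, and what I would expect to be the main thing to verify carefully, is the assertion that $v\otimes_{\Bbbk}1_{A}-1_{A}\otimes_{\Bbbk}v$ is genuinely nonzero in $A\otimes_{\Bbbk}A$ for $v\neq 0$; this is where the decomposition $A=\Bbbk\cdot 1_{A}\oplus V$ together with the basis argument is needed, rather than a naive cancellation argument that would only work in, say, $A\otimes_{A}A$. Everything else is a direct consequence of the results already established in the paper.
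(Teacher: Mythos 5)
Your proof is correct, but after the common first step it diverges from the paper's argument in an interesting way. Both proofs begin by invoking Theorem \ref{thm:alg} to get that $\varphi:\Bbbk\to A$ is a ring epimorphism and Lemma \ref{lem:ringepim} to get that the multiplication $m:A\otimes_{\Bbbk}A\to A$ is bijective. The paper then finishes by a dimension count: it cites Pierce's result that a separable algebra over a field is finite-dimensional, so $A\otimes_{\Bbbk}A\cong A$ forces $(\dim_{\Bbbk}A)^{2}=\dim_{\Bbbk}A$ and hence $\dim_{\Bbbk}A\le 1$. You instead exploit the \emph{injectivity} of $m$ directly: since $m(v\otimes_{\Bbbk}1_{A}-1_{A}\otimes_{\Bbbk}v)=0$, you get $v\otimes_{\Bbbk}1_{A}=1_{A}\otimes_{\Bbbk}v$, and the basis argument built from the decomposition $A=\Bbbk\cdot 1_{A}\oplus V$ then forces $v=0$ for all $v\in V$. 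Your verification of the key point --- that $v\otimes_{\Bbbk}1_{A}-1_{A}\otimes_{\Bbbk}v\neq 0$ for $0\neq v\in V$ because the basis tensors $e_{i}\otimes 1_{A}$ and $1_{A}\otimes e_{i}$ are distinct basis elements of $A\otimes_{\Bbbk}A$ --- is sound. What your route buys is self-containment: you avoid the external finite-dimensionality theorem entirely, and your argument is essentially the standard elementary proof that the only ring epimorphisms out of a field with nonzero target are isomorphisms, needing no separability input beyond what Theorem \ref{thm:alg} already provides. What the paper's route buys is brevity, at the cost of importing a nontrivial structure theorem from the literature.
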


\begin{proof}
By Theorem \ref{thm:alg}, the unit $u:\Bbbk \rightarrow A$ is a ring
epimorphism. By Lemma \ref{lem:ringepim}, we have that $A\otimes _{\Bbbk
}A\cong A$ via multiplication. Since $A$ is h-separable over $\Bbbk $ it is
in particular separable over $\Bbbk .$ By \cite[page 184]{Pi1}, the
separable $\Bbbk $-algebra $A$ is finite-dimensional. Thus, from $A\otimes
_{\Bbbk }A\cong A$ we deduce that $A$ has either dimensional one or
zero over $\Bbbk $.
\end{proof}

\begin{example}
$\mathbb{C}/\mathbb{R}$ is separable but not h-separable. In fact, by
Proposition \ref{Pro:hsepoverfield}, $\mathbb{C}/\mathbb{R}$ is not
h-separable. On the other hand $e=\frac{1}{2}\left( 1\otimes 1-i\otimes
i\right) $ is a separability idempotent (it is the only possible one). It is
clear that $e$ is not a h-separability idempotent.
\end{example}

\begin{invisible}
We now that $\mathbb{C}/\mathbb{R}$ is classically separable because it is a
finite extension. Let us see if it is h-separable as well. Let $%
e=a_{11}1\otimes 1+a_{12}1\otimes i+a_{21}i\otimes 1+a_{22}i\otimes i$ be a
h-separability idempotent. Then $1=m\left( e\right)
=a_{11}-a_{22}+a_{12}i+a_{21}i$ so that $a_{11}=a_{22}+1$ and $%
a_{12}+a_{21}=0.$ Moreover%
\begin{eqnarray*}
ie &=&a_{11}i\otimes 1+a_{12}i\otimes i-a_{21}1\otimes 1-a_{22}1\otimes i, \\
ei &=&a_{11}1\otimes i-a_{12}1\otimes 1+a_{21}i\otimes i-a_{22}i\otimes 1.
\end{eqnarray*}%
Since $ie=ei$ we get $a_{11}=-a_{22},a_{12}=a_{21}.$ Thus $a_{12}=a_{21}=0,$
$a_{22}=-\frac{1}{2}$ and $a_{11}=\frac{1}{2}$ so that
\begin{equation*}
e=\frac{1}{2}\left( 1\otimes 1-i\otimes i\right) .
\end{equation*}

Moreover the condition $\left( \ref{hvspdmp2}\right) $ rewrites as%
\begin{equation*}
\frac{1}{4}\left( 1\otimes 1\otimes 1-1\otimes i\otimes i-i\otimes i\otimes
1-i\otimes 1\otimes i\right) =\frac{1}{2}\left( 1\otimes 1\otimes 1-i\otimes
1\otimes i\right)
\end{equation*}%
a contradiction. Thus $\mathbb{C}/\mathbb{R}$ is not h-separable.
\end{invisible}

\begin{remark}
Let $\Bbbk $ be a field. Set $A=B=\Bbbk ,R=\Bbbk \times \Bbbk =S.$ Then $A$
and $B$ are $R$-algebras and $S=A\times B$ is their product in the category
of $R$-algebras. Moreover $S/R$ is h-separable as $S=R.$ Hence the product
of $R$-algebras may be h-separable.
\end{remark}

\begin{lemma}
\label{lem:prodinsep}Let $A$ and $B$ be $R$-algebras and let $S=A\times B$
be their product in the category of $R$-algebras. Set $e_{1}:=\left(
1_{A},0_{B}\right) \in S$ and $e_{2}:=\left( 0_{A},1_{B}\right) \in S.$ The
following are equivalent.

\begin{itemize}
\item[$\left( i\right) $] $S/R$ is h-separable.

\item[$\left( ii\right) $] $A/R$ and $B/R$ are h-separable and $e_{1}\otimes
_{R}e_{2}=0=e_{2}\otimes _{R}e_{1}.$
\end{itemize}
\end{lemma}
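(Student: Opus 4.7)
The plan is to exploit the Peirce-type decomposition induced by the orthogonal central idempotents $e_{1},e_{2}\in S$. As an $R$-module one has $S=A^{\prime }\oplus B^{\prime }$ where $A^{\prime }:=e_{1}S\cong A$ and $B^{\prime }:=e_{2}S\cong B$ (with the obvious $R$-algebra structures), which yields
\begin{equation*}
S\otimes _{R}S=(A^{\prime }\otimes _{R}A^{\prime })\oplus (A^{\prime }\otimes _{R}B^{\prime })\oplus (B^{\prime }\otimes _{R}A^{\prime })\oplus (B^{\prime }\otimes _{R}B^{\prime })
\end{equation*}
and an analogous eight-summand decomposition of $S\otimes _{R}S\otimes _{R}S$. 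Since the cross products $(a,0)(0,b)$ always vanish in $S$, many of these summands will collapse under multiplication, which I exploit systematically.

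For $(ii)\Rightarrow (i)$, let $e_{A}=\sum _{i}a_{i}\otimes _{R}b_{i}$ and $e_{B}=\sum _{j}c_{j}\otimes _{R}d_{j}$ be h-separability idempotents for $A/R$ and $B/R$, and set
\begin{equation*}
e:=\sum _{i}(a_{i},0)\otimes _{R}(b_{i},0)+\sum _{j}(0,c_{j})\otimes _{R}(0,d_{j})\in S\otimes _{R}S.
\end{equation*}
The two conditions in \eqref{hvspdmp1} for $e$ reduce componentwise to the corresponding conditions for $e_{A}$ and $e_{B}$. For \eqref{hvspdmp2}, the left hand side $e_{12}\cdot e_{23}$ contributes only in the pure summands $A^{\prime }\otimes _{R}A^{\prime }\otimes _{R}A^{\prime }$ and $B^{\prime }\otimes _{R}B^{\prime }\otimes _{R}B^{\prime }$, because the middle products $(b_{i},0)(0,c_{j})=0$ annihilate the mixed contributions. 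The right hand side $\sum _{i}(a_{i},0)\otimes _{R}1_{S}\otimes _{R}(b_{i},0)+\sum _{j}(0,c_{j})\otimes _{R}1_{S}\otimes _{R}(0,d_{j})$ expands, after writing $1_{S}=e_{1}+e_{2}$, into both pure and mixed terms. The mixed terms $\sum _{i}(a_{i},0)\otimes _{R}e_{2}\otimes _{R}(b_{i},0)$ and $\sum _{j}(0,c_{j})\otimes _{R}e_{1}\otimes _{R}(0,d_{j})$ vanish thanks to the hypothesis $e_{1}\otimes _{R}e_{2}=0=e_{2}\otimes _{R}e_{1}$; for instance $(a_{i},0)\otimes _{R}e_{2}=(a_{i},0)\cdot (e_{1}\otimes _{R}e_{2})=0$ in $S\otimes _{R}S$. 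The remaining pure terms match via \eqref{hvspdmp2} applied to $e_{A}$ and $e_{B}$.

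For $(i)\Rightarrow (ii)$, let $e$ be an h-separability idempotent for $S/R$ and decompose $e=e_{AA}+e_{AB}+e_{BA}+e_{BB}$ along the Peirce decomposition. The centrality condition at $s=e_{1}$ (namely $e_{1}\cdot e=e\cdot e_{1}$) yields $e_{AA}+e_{AB}=e_{AA}+e_{BA}$, forcing $e_{AB}=0=e_{BA}$ since they live in disjoint summands. Then $m(e)=1_{S}=e_{1}+e_{2}$ forces $m(e_{AA})=e_{1}$ and $m(e_{BB})=e_{2}$, and via $A^{\prime }\cong A$, $B^{\prime }\cong B$ the components $e_{AA}$ and $e_{BB}$ identify with separability idempotents $e_{A}$ and $e_{B}$ of $A/R$ and $B/R$. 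Projecting \eqref{hvspdmp2} for $e$ onto the pure summands $A^{\prime }\otimes _{R}A^{\prime }\otimes _{R}A^{\prime }$ and $B^{\prime }\otimes _{R}B^{\prime }\otimes _{R}B^{\prime }$ recovers \eqref{hvspdmp2} for $e_{A}$ and $e_{B}$, so both $A/R$ and $B/R$ are h-separable.

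The main obstacle --- and the most delicate step of the whole argument --- is extracting the vanishing $e_{1}\otimes _{R}e_{2}=0=e_{2}\otimes _{R}e_{1}$ from the heaviness of $e$. For this I project \eqref{hvspdmp2} onto the $A^{\prime }\otimes _{R}B^{\prime }\otimes _{R}A^{\prime }$ summand: the left hand side contributes nothing (all its terms lie in the two pure threefold summands), while the right hand side contributes exactly $\sum _{i}(a_{i},0)\otimes _{R}e_{2}\otimes _{R}(b_{i},0)$, which must therefore vanish. Applying the $R$-linear collapse map $A^{\prime }\otimes _{R}B^{\prime }\otimes _{R}A^{\prime }\to A^{\prime }\otimes _{R}B^{\prime }$ sending $(x,0)\otimes _{R}(0,y)\otimes _{R}(x^{\prime },0)\mapsto (xx^{\prime },0)\otimes _{R}(0,y)$ and using $\sum _{i}a_{i}b_{i}=1_{A}$ yields $e_{1}\otimes _{R}e_{2}=0$; symmetrically, projecting onto $B^{\prime }\otimes _{R}A^{\prime }\otimes _{R}B^{\prime }$ gives $e_{2}\otimes _{R}e_{1}=0$. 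The underlying subtlety is that $S\otimes _{R}S$ is only $R$-balanced, so the central idempotents of $S$ cannot be moved freely across the tensor, which is precisely why the supplementary vanishing conditions must be imposed as hypotheses and why one must work to recover them in the converse direction.
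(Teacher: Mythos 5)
Your proof is correct, but it follows a genuinely different route from the paper's. The paper first invokes Theorem \ref{thm:alg} and Lemma \ref{lem:ringepim} to replace both conditions by statements about $1\otimes_R 1$: since $S$, $A$, $B$ are $R$-algebras, h-separability of each extension is equivalent to $1\otimes_R 1$ being its (unique) separability idempotent; the orthogonality $e_1\otimes_R e_2=0=e_2\otimes_R e_1$ then drops out for free from $e_i\otimes_R e_j=e_i(1_S\otimes_R 1_S)e_j=(1_S\otimes_R 1_S)e_ie_j=0$, and the heaviness condition \eqref{hvspdmp2} never needs to be touched again; what remains is only to match the centrality condition $s(1_S\otimes_R 1_S)=(1_S\otimes_R 1_S)s$ against its $A$- and $B$-counterparts via $1_S\otimes_R 1_S=e_1\otimes_R e_1+e_2\otimes_R e_2$ and the surjectivity of the projections. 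You instead work directly with arbitrary h-separability idempotents through the Peirce decomposition of $S\otimes_R S$ and $S\otimes_R S\otimes_R S$, and you extract $e_1\otimes_R e_2=0$ by projecting \eqref{hvspdmp2} onto the mixed summand $A'\otimes_R B'\otimes_R A'$ and collapsing with $\sum_i a_ib_i=1_A$ --- a step the paper's normalization makes unnecessary. I checked the delicate points (well-definedness of your collapse map uses centrality of $R$ in $A$ and $B$; the vanishing of $(a_i,0)\otimes_R e_2$ from $e_1\otimes_R e_2=0$; the identification of $e_{AA}$, $e_{BB}$ with separability idempotents of $A/R$, $B/R$) and they all go through. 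What your approach buys is self-containedness: it relies only on Proposition \ref{pro:S/R} and elementary tensor computations, not on the comparatively heavy Theorem \ref{thm:alg} (ring epimorphism, commutativity of $S$) or on the uniqueness of separability idempotents. What the paper's approach buys is brevity and the structural insight that in the algebra setting everything reduces to the single candidate idempotent $1\otimes_R 1$.
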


\begin{proof}
First, by Theorem \ref{thm:alg} and Lemma \ref{lem:ringepim}, the conditions
$\left( i\right) $ and $\left( ii\right) $ can be replaced respectively by

\begin{itemize}
\item $1_{S}\otimes _{R}1_{S}$ is a separability idempotent of $S/R$

\item $1_{A}\otimes _{R}1_{A}$ and $1_{B}\otimes _{R}1_{B}$ are separability
idempotents of $A/R$ and $B/R$ respectively and $e_{1}\otimes
_{R}e_{2}=0=e_{2}\otimes _{R}e_{1}.$
\end{itemize}

Note also that, if the first condition holds, then, for $i\neq j$ we get
\begin{equation*}
e_{i}\otimes _{R}e_{j}=e_{i}1_{S}\otimes _{R}1_{S}e_{j}=1_{S}\otimes
_{R}1_{S}e_{i}e_{j}=0
\end{equation*}

so that $e_{1}\otimes _{R}e_{2}=0=e_{2}\otimes _{R}e_{1}.$ Thus the latter
condition can be assumed to hold.

Denote by $p_{A}:S\rightarrow A$ and $p_{B}:S\rightarrow B$ the canonical
projections.

Since $1_{S}=e_{1}+e_{2}$ and $e_{1}\otimes _{R}e_{2}=0=e_{2}\otimes
_{R}e_{1},$ we get that $1_{S}\otimes _{R}1_{S}=e_{1}\otimes
_{R}e_{1}+e_{2}\otimes _{R}e_{2}$. For every $s\in S$ we have%
\begin{eqnarray*}
s1_{S}\otimes _{R}1_{S} &=&se_{1}\otimes _{R}e_{1}+se_{2}\otimes _{R}e_{2} \\
&=&s\left( 1_{A},0_{B}\right) \otimes _{R}\left( 1_{A},0_{B}\right) +s\left(
0_{A},1_{B}\right) \otimes _{R}\left( 0_{A},1_{B}\right) \\
&=&\left( p_{A}\left( s\right) 1_{A},0_{B}\right) \otimes _{R}\left(
1_{A},0_{B}\right) +\left( 0_{A},p_{B}\left( s\right) 1_{B}\right) \otimes
_{R}\left( 0_{A},1_{B}\right) \\
&=&\left( i_{A}\otimes _{R}i_{A}\right) \left( p_{A}\left( s\right)
1_{A}\otimes _{R}1_{A}\right) +\left( i_{B}\otimes _{R}i_{B}\right) \left(
p_{B}\left( s\right) 1_{B}\otimes _{R}1_{B}\right)
\end{eqnarray*}%
Similarly $1_{S}\otimes _{R}1_{S}s=\left( i_{A}\otimes _{R}i_{A}\right)
\left( 1_{A}\otimes _{R}1_{A}p_{A}\left( s\right) \right) +\left(
i_{B}\otimes _{R}i_{B}\right) \left( 1_{B}\otimes _{R}1_{B}p_{B}\left(
s\right) \right) .$

\begin{invisible}
Here is the computation
\begin{eqnarray*}
&=&\left( i_{A}\otimes _{R}i_{A}\right) \left( 1_{A}\otimes
_{R}1_{A}p_{A}\left( s\right) \right) +\left( i_{B}\otimes _{R}i_{B}\right)
\left( 1_{B}\otimes _{R}1_{B}p_{B}\left( s\right) \right) \\
&=&\left( 1_{A},0_{B}\right) \otimes _{R}\left( 1_{A}p_{A}\left( s\right)
,0_{B}\right) +\left( 0_{A},1_{B}\right) \otimes _{R}\left(
0_{A},1_{B}p_{B}\left( s\right) \right) \\
&=&\left( 1_{A},0_{B}\right) \otimes _{R}\left( 1_{A},0_{B}\right) s+\left(
0_{A},1_{B}\right) \otimes _{R}\left( 0_{A},1_{B}\right) s \\
&=&e_{1}\otimes _{R}e_{1}s+e_{2}\otimes _{R}e_{2}s=1_{S}\otimes _{R}1_{S}s
\end{eqnarray*}
\end{invisible}

As a consequence, for every $s\in S$
\begin{equation*}
s1_{S}\otimes _{R}1_{S}=1_{S}\otimes _{R}1_{S}s\Longleftrightarrow
\end{equation*}%
\begin{eqnarray*}
&&\left( i_{A}\otimes _{R}i_{A}\right) \left( p_{A}\left( s\right)
1_{A}\otimes _{R}1_{A}\right) +\left( i_{B}\otimes _{R}i_{B}\right) \left(
p_{B}\left( s\right) 1_{B}\otimes _{R}1_{B}\right) \\
&=&\left( i_{A}\otimes _{R}i_{A}\right) \left( 1_{A}\otimes
_{R}1_{A}p_{A}\left( s\right) \right) +\left( i_{B}\otimes _{R}i_{B}\right)
\left( 1_{B}\otimes _{R}1_{B}p_{B}\left( s\right) \right) \Longleftrightarrow
\end{eqnarray*}%
\begin{equation*}
p_{A}\left( s\right) 1_{A}\otimes _{R}1_{A}=1_{A}\otimes
_{R}1_{A}p_{A}\left( s\right) \text{\qquad and \qquad }p_{B}\left( s\right)
1_{B}\otimes _{R}1_{B}=1_{B}\otimes _{R}1_{B}p_{B}\left( s\right) .
\end{equation*}

Since $p_{A}$ and $p_{B}$ are surjective, we get that to require that $%
s1_{S}\otimes _{R}1_{S}=1_{S}\otimes _{R}1_{S}s$ for every $s\in S$ is
equivalent to require that%
\begin{equation*}
a1_{A}\otimes _{R}1_{A}=1_{A}\otimes _{R}1_{A}a\text{\qquad and \qquad }%
b1_{B}\otimes _{R}1_{B}=1_{B}\otimes _{R}1_{B}b
\end{equation*}%
for every $a\in A,b\in B.$ We have so proved that $1_{S}\otimes _{R}1_{S}$
is a separability idempotent of $S/R$ if and only if $1_{A}\otimes _{R}1_{A}$
and $1_{B}\otimes _{R}1_{B}$ are separability idempotents of $A/R$ and $B/R$
under the assumption $e_{1}\otimes _{R}e_{2}=0=e_{2}\otimes _{R}e_{1}.$
\end{proof}

The following result is similar to \cite[Corollary 1.7 page 44]{DI}.

\begin{lemma}
\label{Lem:otalg}Let $R$ be a commutative ring. Let $A$ and $B$ be $R$%
-algebras. Then, if $B/R$ is h-separable, so is $\left( A\otimes
_{R}B\right) /A$. As a consequence if both $A/R$ and $B/R$ are h-separable,
so is $\left( A\otimes _{R}B\right) /R$ .
\end{lemma}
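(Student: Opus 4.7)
The plan is to transport an h-separability idempotent of $B/R$ directly into $(A\otimes_{R}B)\otimes_{A}(A\otimes_{R}B)$ using the canonical ring homomorphism $\iota_{B}:B\rightarrow A\otimes_{R}B$, $b\mapsto 1_{A}\otimes_{R}b$. Concretely, let $\sum_{i}b_{i}\otimes_{R}b'_{i}\in B\otimes_{R}B$ be an h-separability idempotent for $B/R$ (which exists by Proposition \ref{pro:S/R}), and set
\begin{equation*}
f:=\sum_{i}(1_{A}\otimes_{R}b_{i})\otimes_{A}(1_{A}\otimes_{R}b'_{i})\in(A\otimes_{R}B)\otimes_{A}(A\otimes_{R}B).
\end{equation*}
I would then check that $f$ is an h-separability idempotent for $(A\otimes_{R}B)/A$, whence by Proposition \ref{pro:S/R} the extension $(A\otimes_{R}B)/A$ is h-separable.

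For the verifications the key tool is the canonical isomorphism of $A$-bimodules (respectively, rings, when equipped with componentwise multiplication on the right-hand side)
\begin{equation*}
(A\otimes_{R}B)\otimes_{A}(A\otimes_{R}B)\cong A\otimes_{R}B\otimes_{R}B,\quad(a\otimes_{R}b)\otimes_{A}(a'\otimes_{R}b')\longmapsto aa'\otimes_{R}b\otimes_{R}b',
\end{equation*}
together with its obvious triple-tensor analogue. Under this isomorphism $f$ corresponds to $1_{A}\otimes_{R}\sum_{i}b_{i}\otimes_{R}b'_{i}$. The unit condition $m(f)=1_{A\otimes_{R}B}$ is immediate from $\sum_{i}b_{i}b'_{i}=1_{B}$. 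For the centralizing condition in \eqref{hvspdmp1}, by $A$-linearity it suffices to test it on elements $s=1_{A}\otimes_{R}b$ with $b\in B$, in which case $s\cdot f$ and $f\cdot s$ correspond respectively to $1_{A}\otimes_{R}\sum_{i}bb_{i}\otimes_{R}b'_{i}$ and $1_{A}\otimes_{R}\sum_{i}b_{i}\otimes_{R}b'_{i}b$, so the condition reduces to the centralizing property of $\sum_{i}b_{i}\otimes_{R}b'_{i}$ in $B\otimes_{R}B$. Analogously, \eqref{hvspdmp2} for $f$ translates, under the triple-tensor isomorphism, into $1_{A}\otimes_{R}\sum_{i,j}b_{i}\otimes_{R}b'_{i}b_{j}\otimes_{R}b'_{j}=1_{A}\otimes_{R}\sum_{i}b_{i}\otimes_{R}1_{B}\otimes_{R}b'_{i}$, which is exactly \eqref{hvspdmp2} for $\sum_{i}b_{i}\otimes_{R}b'_{i}$.

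For the final assertion, if $A/R$ is h-separable as well, then applying Corollary \ref{coro:hsepsurj}(1) to the ring homomorphisms $R\rightarrow A\rightarrow A\otimes_{R}B$ (the second one being $a\mapsto a\otimes_{R}1_{B}$) yields that $(A\otimes_{R}B)/R$ is h-separable.

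The only mildly delicate point is keeping track of the $A$-bimodule structures when writing down the isomorphism $(A\otimes_{R}B)\otimes_{A}(A\otimes_{R}B)\cong A\otimes_{R}B\otimes_{R}B$; once this is done, the verification of \eqref{hvspdmp1} and \eqref{hvspdmp2} is an essentially formal transfer from the corresponding identities for $B/R$. Commutativity of $R$ is used (only) to guarantee that the tensor products involved are well-defined and that the relevant maps are $R$-bilinear.
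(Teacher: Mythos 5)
Your proof is correct, but it takes a genuinely different route from the paper's. The paper first invokes Theorem \ref{thm:alg} to convert the h-separability of $B/R$ into the statement that the unit $u_{B}:R\rightarrow B$ is a ring epimorphism (equivalently, by Lemma \ref{lem:ringepim}, that $1_{B}\otimes_{R}1_{B}$ is the separability idempotent); it then observes that $\left(1_{A}\otimes_{R}1_{B}\right)\otimes_{A}\left(1_{A}\otimes_{R}1_{B}\right)$ is again a separability idempotent, so that $A\otimes_{R}u_{B}$ is a ring epimorphism and $\left(A\otimes_{R}B\right)/A$ is h-separable by Lemma \ref{lem:ringepim}; the second claim is then obtained by composing the ring epimorphisms $u_{A}$ and $A\otimes_{R}u_{B}$ and applying Theorem \ref{thm:alg} again. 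You instead transport an arbitrary h-separability idempotent of $B/R$ along $b\mapsto 1_{A}\otimes_{R}b$ and verify \eqref{hvspdmp1} and \eqref{hvspdmp2} directly through the isomorphism $(A\otimes_{R}B)\otimes_{A}(A\otimes_{R}B)\cong A\otimes_{R}B\otimes_{R}B$, and you get the second claim from transitivity (Corollary \ref{coro:hsepsurj}). Your argument is more self-contained, using only Proposition \ref{pro:S/R} and Corollary \ref{coro:hsepsurj} rather than Theorem \ref{thm:alg}, and it makes the analogy with the classical base change of separability idempotents (\cite[Corollary 1.7]{DI}) explicit; the paper's argument is shorter given its machinery and in addition identifies the idempotent of $\left(A\otimes_{R}B\right)/A$ as the image of the identity, i.e., exhibits $A\rightarrow A\otimes_{R}B$ as a ring epimorphism --- which, by Theorem \ref{thm:alg} and the uniqueness statement in Lemma \ref{lem:ringepim}, is what your idempotent must reduce to anyway, since $\sum_{i}b_{i}\otimes_{R}b'_{i}=1_{B}\otimes_{R}1_{B}$. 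One cosmetic caveat: your parenthetical claim that $(A\otimes_{R}B)\otimes_{A}(A\otimes_{R}B)$ is a ring under componentwise multiplication requires $A$ to be commutative (the image of $A$ need not be central in $A\otimes_{R}B$); this is harmless, since your verification only uses the $\left(A\otimes_{R}B\right)$-bimodule structure.
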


\begin{proof}
Since $B/R$ is h-separable, by Theorem \ref{thm:alg}, we have that the unit $%
u_{B}:R\rightarrow B$ is a ring epimorphism. By Lemma \ref{lem:ringepim}
this means that $1_{B}\otimes _{R}1_{B}$ is a separability idempotent. Thus
also $\left( 1_{A}\otimes _{R}1_{B}\right) \otimes _{A}\left( 1_{A}\otimes
_{R}1_{B}\right) $ is a separability idempotent.

\begin{invisible}
Let us check it:

$\left( a\otimes _{R}b\right) \left( 1_{A}\otimes _{R}1_{B}\right) \otimes
_{A}\left( 1_{A}\otimes _{R}1_{B}\right) =\left( 1_{A}\otimes
_{R}b1_{B}\right) \left( a\otimes _{R}1_{B}\right) \otimes _{A}\left(
1_{A}\otimes _{R}1_{B}\right) $

$=\left( 1_{A}\otimes _{R}1_{B}\right) \otimes _{A}\left( a\otimes
_{R}1_{B}\right) \left( 1_{A}\otimes _{R}1_{B}b\right) =\left( 1_{A}\otimes
_{R}1_{B}\right) \otimes _{A}\left( 1_{A}\otimes _{R}1_{B}\right) \left(
a\otimes _{R}b\right) .$
\end{invisible}

As a consequence also $A\otimes _{R}u_{B}:A\otimes _{R}R\rightarrow A\otimes
_{R}B$ is a ring epimorphism by the same lemma. If $A/R$ is h-separable,
then the unit $u_{A}:R\rightarrow A$ is a ring epimorphism too. Thus the
composition
\begin{equation*}
\xymatrixcolsep{0.7cm}\xymatrixrowsep{0.7cm}
\xymatrix{R\ar[r]^-{u_A}&A\cong A\otimes _{R}R\ar[rr]^-{A\otimes
_{R}u_{B}}&&A\otimes _{R}B,}
\end{equation*}
\begin{invisible}
  \begin{equation*}
R\overset{u_{A}}{\rightarrow }A\cong A\otimes _{R}R\overset{A\otimes
_{R}u_{B}}{\rightarrow }A\otimes _{R}B,
\end{equation*}%
\end{invisible}
i.e. the unit of $A\otimes _{R}B$, is an epimorphism. By Theorem \ref%
{thm:alg}, $\left( A\otimes _{R}B\right) /R$ is h-separable.
\begin{invisible}
Another proof: $u_{B}$ epi$\Rightarrow A\otimes _{R}u_{B}$ epi.

$f,g:A\otimes _{R}B\rightarrow E$ ring map such that $f\circ \left( A\otimes
_{R}u_{B}\right) =g\circ \left( A\otimes _{R}u_{B}\right) .$

$f\left( a\otimes _{R}b\right) =f\left( \left( a\otimes _{R}1_{B}\right)
\left( 1_{A}\otimes _{R}b\right) \right) =f\left( a\otimes _{R}1_{B}\right)
f\left( 1_{A}\otimes _{R}b\right) =f\left( a\otimes _{R}u_{B}\left(
1_{R}\right) \right) f\left( 1_{A}\otimes _{R}b\right) $

$g\left( a\otimes _{R}b\right) =g\left( a\otimes _{R}u_{B}\left(
1_{R}\right) \right) g\left( 1_{A}\otimes _{R}b\right) $

$f\left( 1_{A}\otimes _{R}b\right) =?=g\left( 1_{A}\otimes _{R}b\right) $

$f\circ \left( A\otimes _{R}u_{B}\right) =g\circ \left( A\otimes
_{R}u_{B}\right) \Rightarrow f\left( 1_{A}\otimes _{R}u_{B}\left( r\right)
\right) =g\left( 1_{A}\otimes _{R}u_{B}\left( r\right) \right) $

$\Rightarrow f\left( 1_{A}\otimes _{R}-\right) u_{B}\left( r\right) =g\left(
1_{A}\otimes _{R}-\right) u_{B}\left( r\right) \Rightarrow f\left(
1_{A}\otimes _{R}-\right) \circ u_{B}=g\left( 1_{A}\otimes _{R}-\right)
\circ u_{B}$

$\Rightarrow f\left( 1_{A}\otimes _{R}-\right) =g\left( 1_{A}\otimes
_{R}-\right) .$
\end{invisible}
\end{proof}

\begin{remark}
Let $R$ be a ring, $G$ be a group and consider  $RG$, the group ring. S.
Caenepeel posed the following problem: to characterize whether $RG/R\ $is
h-separable. In general we do not have an answer to this question. However,
if $R$ is commutative, we can consider a maximal ideal $M$ of $R$ and take $%
\Bbbk :=R/M.$ By Lemma \ref{Lem:otalg}, we deduce that $\left( \Bbbk \otimes
_{R}RG\right) /\Bbbk $ is h-separable i.e. $\Bbbk G/\Bbbk $ is h-separable.
By Proposition \ref{Pro:hsepoverfield}, we conclude that $\Bbbk G=\Bbbk $
and hence $\left\vert G\right\vert =1.$
\end{remark}

\section{Example on monoidal categories}\label{sec:4}

In the present section $\mathcal{M}$ denotes a preadditive braided monoidal
category such that

\begin{itemize}
\item $\mathcal{M}$ has equalizers and denumerable coproducts;

\item the tensor products are additive and preserve equalizers and
denumerable coproducts.
\end{itemize}

In view of the assumptions above, we can apply \cite[Theorem 4.6]%
{AM-BraidedOb} to obtain an adjunction $\left(
\mathbf{T},\mathbf{P}\right) $ as in the following diagram
\begin{invisible}
  \begin{equation*}
\begin{array}{ccc}
\mathrm{Bialg}\left( \mathcal{M}\right) & \overset{\mho }{\longrightarrow }
& \mathrm{Alg}\left( \mathcal{M}\right) \\
\mathbf{T}\uparrow \downarrow \mathbf{P} &  & T\uparrow \downarrow \Omega \\
\mathcal{M} & \overset{\mathrm{Id}}{\longrightarrow } & \mathcal{M}%
\end{array}
\end{equation*}
\end{invisible}
\begin{equation*}
\xymatrixcolsep{0.7cm}\xymatrixrowsep{0.7cm}
\xymatrix{\Bialg(\M)\ar[rr]^{\mho}\ar@<.5ex>[d]^{\mathbf{P}}&&\Alg(\M)\ar@<.5ex>[d]^{\Omega}\\
\M\ar[rr]^{\id}\ar@<.5ex>[u]^{\mathbf{T}}&&\M\ar@<.5ex>[u]^{T} }
\end{equation*}
Here $\Alg(\M)$ denotes the category of algebras (or monoids) in $\M$, $\Bialg(\M)$ is the category of bialgebras (or bimonoids) in $\M$, the functors $\mho$ and $\Omega$ are the obvious forgetful functors and, by construction of $\mathbf{T}$, we have $\mho \circ \mathbf{T}=T.$

It is noteworthy that, since $\Omega $ has a left adjoint $T$, then $\Omega $
is strictly monadic (the comparison functor is a category isomorphism), see
\cite[Theorem A.6]{AM-MM}.

Let $V\in \mathcal{M}.$ By construction $\Omega TV=\oplus _{n\in \mathbb{N}%
}V^{\otimes n},$ see \cite[Remark 1.2]{AM-BraidedOb}. Denote by $\alpha
_{n}V:V^{\otimes n}\rightarrow \Omega TV$ the canonical inclusion. The unit
$\eta :\mathrm{Id}_{\mathcal{%
M}}\rightarrow \Omega T$ of the adjunction $\left( T,\Omega \right) $ is defined by $\eta V:=\alpha _{1}V$ while the counit $%
\epsilon :T\Omega \rightarrow \mathrm{Id}$ is uniquely defined by the
equality
\begin{equation}
\Omega \epsilon \left( A,m,u\right) \circ \alpha _{n}A=m^{n-1}\text{ for
every }n\in \mathbb{N}  \label{form:epsTOmega}
\end{equation}%
where $m^{n-1}:A^{\otimes n}\rightarrow A$ denotes the iterated
multiplication of an algebra $\left( A,m,u\right) $ defined by $%
m^{-1}=u,m^{0}=\mathrm{Id}_{A}$ and, for $n\geq 2,m^{n-1}=m\circ \left(
m^{n-2}\otimes A\right) .$

Denote by $\boldsymbol{\eta },\boldsymbol{\epsilon }$ the unit and counit of
the adjunction $\left( \mathbf{T},\mathbf{P}\right) $.

Consider the natural transformation $\xi :\mathbf{P}\rightarrow \Omega \mho $
defined by%
\begin{equation*}
\xymatrixcolsep{1cm}\xymatrixrowsep{0.7cm}
\xymatrix{\mathbf{P}\ar[r]^-{\eta P}&\Omega T\mathbf{P}=\Omega \mho
\mathbf{T}\mathbf{P}\ar[r]^-{\Omega \mho \boldsymbol{\epsilon }} &\Omega \mho .}
\end{equation*}
\begin{invisible}
\begin{equation*}
\mathbf{P}\overset{\eta P}{\rightarrow }\Omega T\mathbf{P}=\Omega \mho
\mathbf{T}\mathbf{P}\overset{\Omega \mho \boldsymbol{\epsilon }}{\rightarrow
}\Omega \mho .
\end{equation*}%
\end{invisible}
We have $\epsilon \mho \circ T\xi =\epsilon \mho \circ T\Omega \mho \boldsymbol{%
\epsilon }\circ T\eta P=\mho \boldsymbol{\epsilon }\circ \epsilon T\mathbf{P}%
\circ T\eta \mathbf{P}=\mho \boldsymbol{\epsilon }$ i.e.%
\begin{equation}
\epsilon \mho \circ T\xi =\mho \boldsymbol{\epsilon }.  \label{form:epstilde}
\end{equation}%
so that $\xi $ is exactly the natural transformation of \cite[Theorem 4.6]%
{AM-BraidedOb}, whose components are the canonical inclusions of the
subobject of primitives of a bialgebra $B$ in $\mathcal{M}$ into $\Omega
\mho B$ and hence they are regular monomorphisms.

Define the functor
\begin{equation*}
\left( -\right) ^{+}:\mathrm{Bialg}\left( \mathcal{M}\right) \rightarrow
\mathcal{M}
\end{equation*}%
that assigns to every bialgebra $A$ the kernel $\left( A^{+},\zeta
A:A^{+}\rightarrow \Omega \mho A\right) $ of $\varepsilon _{\Omega \mho A}$
(i.e. the equalizer of $\varepsilon _{\Omega \mho A}:\Omega \mho
A\rightarrow \mathbf{1}$ and the zero morphism) and to every morphism $f$
the induced morphism $f^{+}.$

Since $\zeta A$ is natural in $A$ we get a natural transformation $\zeta
:\left( -\right) ^{+}\rightarrow \Omega \mho $ which is by construction a
monomorphism on components.

\begin{lemma}
\label{lem:equifib}The natural transformation $\xi :\mathbf{P}\rightarrow
\Omega \mho $ factors through the natural transformation $\zeta :\left(
-\right) ^{+}\rightarrow \Omega \mho $ (i.e. there is $\widehat{\xi }:%
\mathbf{P}\rightarrow \left( -\right) ^{+}$ such that $\xi =\zeta \circ
\widehat{\xi }$) which is a monomorphism on components.
\end{lemma}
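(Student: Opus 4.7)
My plan is to use the universal property of the equalizer defining $(A^{+},\zeta A)$. For each bialgebra $A$, I would first establish that $\varepsilon_{\Omega\mho A}\circ \xi A$ coincides with the zero morphism; because $\zeta A$ is the equalizer of $\varepsilon_{\Omega\mho A}$ and $0$, this immediately produces a unique morphism $\widehat{\xi}A\colon \mathbf{P}A\to A^{+}$ with $\zeta A\circ \widehat{\xi}A=\xi A$. Naturality of $\widehat{\xi}$ then follows automatically: for $f\colon A\to B$ in $\mathrm{Bialg}(\mathcal{M})$, both $\widehat{\xi}B\circ \mathbf{P}f$ and $f^{+}\circ \widehat{\xi}A$ yield the same morphism after post-composition with $\zeta B$ (by naturality of $\xi$ and of $\zeta$), so they coincide by the uniqueness part of the equalizer property.

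The central task is therefore to check $\varepsilon_{\Omega\mho A}\circ \xi A=0$. I would expand $\xi A=\Omega\mho\boldsymbol{\epsilon}A\circ \eta\mathbf{P}A$ using $\eta V=\alpha_{1}V$. Since $\boldsymbol{\epsilon}A\colon \mathbf{T}\mathbf{P}A\to A$ is a morphism in $\mathrm{Bialg}(\mathcal{M})$, it commutes with the bialgebra counits, yielding $\varepsilon_{\Omega\mho A}\circ \Omega\mho\boldsymbol{\epsilon}A=\varepsilon_{\Omega T\mathbf{P}A}$. The problem thus reduces to the identity $\varepsilon_{\Omega TV}\circ \alpha_{1}V=0$ for every $V\in\mathcal{M}$. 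This is built into the bialgebra structure of $\mathbf{T}V$ recalled in the Introduction and constructed in \cite[Theorem 4.6]{AM-BraidedOb}: the inclusion $\alpha_{1}V$ is primitive by the defining property of that structure, and primitives are annihilated by the counit, as one sees diagrammatically by applying $\varepsilon_{\Omega TV}\otimes \mathrm{Id}$ to the primitivity relation $\Delta_{\mathbf{T}V}\circ \alpha_{1}V=\alpha_{1}V\otimes u_{\mathbf{T}V}+u_{\mathbf{T}V}\otimes \alpha_{1}V$ and invoking left counitality of $\Delta_{\mathbf{T}V}$.

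For the monomorphism claim, the discussion just preceding the lemma already notes that each $\xi A$ is a regular monomorphism. Since $\xi A=\zeta A\circ \widehat{\xi}A$ is then monic, the factor $\widehat{\xi}A$ must itself be monic (any morphism whose post-composition with another is monic is monic). I do not expect a genuine obstacle: the proof is essentially formal, and the only care needed is to carry out the counit-annihilation step through categorical diagrams in the preadditive braided monoidal setting rather than element-wise. The decisive input is that $\boldsymbol{\epsilon}A$ is a bialgebra morphism together with the characterising property that the generators $\alpha_{1}V$ of $\mathbf{T}V$ are primitive.
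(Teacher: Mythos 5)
Your proof is correct, and its skeleton --- establish $\varepsilon _{\Omega \mho A}\circ \xi A=0$, invoke the universal property of the equalizer $\left( A^{+},\zeta A\right) $ to get a unique $\widehat{\xi }A$, obtain naturality from the uniqueness clause after post-composing with the monomorphism $\zeta B$, and deduce that $\widehat{\xi }A$ is monic because $\xi A$ is --- coincides with the paper's. The one place where you genuinely diverge is the verification of the key identity $\varepsilon _{\Omega \mho A}\circ \xi A=0$. The paper never unfolds $\xi A$ through the adjunction: it uses that $\xi A$ is itself the equalizer of the ``primitivity defect'' morphism $\left( u_{\Omega \mho A}\otimes \Omega \mho A\right) r_{\Omega \mho A}^{-1}+\left( \Omega \mho A\otimes u_{\Omega \mho A}\right) \circ l_{\Omega \mho A}^{-1}-\Delta _{\Omega \mho A}$ and the zero morphism, and observes that post-composing this defect morphism with $m_{\mathbf{1}}\left( \varepsilon _{\Omega \mho A}\otimes \varepsilon _{\Omega \mho A}\right) $ returns $\varepsilon _{\Omega \mho A}+\varepsilon _{\Omega \mho A}-\varepsilon _{\Omega \mho A}=\varepsilon _{\Omega \mho A}$; this exhibits a morphism between the two defining equalizer diagrams and produces $\widehat{\xi }A$ in one stroke. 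You instead expand $\xi A=\Omega \mho \boldsymbol{\epsilon }A\circ \eta \mathbf{P}A$, use counitality of the bialgebra morphism $\boldsymbol{\epsilon }A$ to reduce to $\varepsilon _{\Omega TV}\circ \alpha _{1}V=0$, and obtain that from primitivity of $\alpha _{1}V$ in $\mathbf{T}V$. Both arguments encode the same fact (primitives are annihilated by the counit); the paper's version applies it abstractly to the primitive subobject of an arbitrary bialgebra and never leaves the two equalizer presentations, while yours routes through the specific tensor bialgebra $\mathbf{T}\mathbf{P}A$ and so depends on the explicit construction of its comultiplication. One micro-step to add to your sketch: applying $\varepsilon _{\Omega TV}\otimes \mathrm{Id}$ to the primitivity relation yields $u_{\Omega TV}\circ \varepsilon _{\Omega TV}\circ \alpha _{1}V=0$ rather than $\varepsilon _{\Omega TV}\circ \alpha _{1}V=0$ directly, so you must also note that $u_{\Omega TV}$ is a split monomorphism (or apply $\varepsilon _{\Omega TV}\otimes \varepsilon _{\Omega TV}$ instead, which gives $\varepsilon \alpha _{1}=2\,\varepsilon \alpha _{1}$ at once). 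This is immediate to repair and does not affect the validity of your argument.
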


\begin{proof}
Given $A\in \mathrm{Bialg}\left( \mathcal{M}\right) $ we have that $\xi $
and $\zeta $ are defined by the following kernels.
\begin{equation*}
\xymatrixcolsep{2.5cm}\xymatrixrowsep{0.7cm}
\xymatrix{\mathbf{P}A\ar[r]^{\xi A}\ar[d]^{\widehat{\xi}}&\Omega \mho A \ar[d]^{\id} \ar[rrr]^{%
\left( u_{\Omega \mho A}\otimes \Omega \mho A\right) r_{\Omega \mho
A}^{-1}+\left( \Omega \mho A\otimes u_{\Omega \mho A}\right) \circ l_{\Omega
\mho A}^{-1}-\Delta _{\Omega \mho A}}&&&\Omega \mho
A\otimes \Omega \mho A \ar[d]_{m_{\mathbf{1}}\left( \varepsilon _{\Omega \mho A}\otimes \varepsilon _{\Omega
\mho A}\right)}\\
A^{+} \ar[r]^{\zeta A}&\Omega \mho A \ar[rrr]^{\varepsilon _{\Omega \mho A}}&&&\mathbf{1}}
\end{equation*}
\begin{invisible}
 \begin{equation*}
\begin{array}{ccccc}
\mathbf{P}A & \overset{\xi A}{\longrightarrow } & \Omega \mho A & \overset{%
\left( u_{\Omega \mho A}\otimes \Omega \mho A\right) r_{\Omega \mho
A}^{-1}+\left( \Omega \mho A\otimes u_{\Omega \mho A}\right) \circ l_{\Omega
\mho A}^{-1}-\Delta _{\Omega \mho A}}{\longrightarrow } & \Omega \mho
A\otimes \Omega \mho A \\
\downarrow \widehat{\xi }A &  & \downarrow \mathrm{Id} &  & \downarrow m_{%
\mathbf{1}}\left( \varepsilon _{\Omega \mho A}\otimes \varepsilon _{\Omega
\mho A}\right) \\
A^{+} & \overset{\zeta A}{\longrightarrow } & \Omega \mho A & \overset{%
\varepsilon _{\Omega \mho A}}{\longrightarrow } & \mathbf{1}%
\end{array}%
\end{equation*}%
\end{invisible}
Since the right square above commutes, there is a unique morphism $\widehat{%
\xi }A:\mathbf{P}A\rightarrow A^{+}$ such that $\zeta A\circ \widehat{\xi }%
A=\xi A.$ The naturality of $\zeta A$ and $\xi A$ in $A$ implies the one of $%
\widehat{\xi }A\ $so that $\zeta \circ \widehat{\xi }=\xi .$
\end{proof}

There is a unique morphism $\omega V:\Omega TV\rightarrow V$ such that%
\begin{equation}
\omega V\circ \alpha _{n}V=\delta _{n,1}\mathrm{Id}_{V}.
\label{form:gamalpha}
\end{equation}%
Given $f:V\rightarrow W$ a morphism in $\mathcal{M},$ we get for every $n\in
\mathbb{N},$%
\begin{equation*}
\omega W\circ \Omega Tf\circ \alpha _{n}V=\omega W\circ \alpha _{n}W\circ
f^{\otimes n}=\delta _{n,1}f^{\otimes n}=\delta _{n,1}f=f\circ \omega V\circ
\alpha _{n}V
\end{equation*}%
so that $\omega W\circ \Omega Tf=f\circ \omega V$ which means that $\omega
:=\left( \omega V\right) _{V\in \mathcal{M}}$ is a natural transformation $%
\omega :\Omega T\rightarrow \mathrm{Id}_{\mathcal{M}}.$

\begin{lemma}
\label{lem:omega}The natural transformation $\omega $ fulfills $\omega \circ
\eta =\mathrm{Id}$ and%
\begin{equation}
\omega \omega \circ \Omega T\zeta \mathbf{T}=\omega \circ \Omega \epsilon
T\circ \Omega T\zeta \mathbf{T}.  \label{form:omega}
\end{equation}
\end{lemma}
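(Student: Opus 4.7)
The first identity is immediate from (\ref{form:gamalpha}): $\omega V\circ\eta V=\omega V\circ\alpha_1V=\mathrm{Id}_V$.

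For (\ref{form:omega}), both sides are morphisms $\Omega T(\mathbf{T}V)^+\to V$ natural in $V\in\mathcal{M}$. Since $\Omega TX=\bigoplus_{n\in\mathbb{N}}X^{\otimes n}$ with canonical injections $\alpha_nX$, I plan to test the equality by precomposing with each $\alpha_n(\mathbf{T}V)^+$. Using naturality of $\alpha_n$ at $\zeta\mathbf{T}V$, together with (\ref{form:gamalpha}) at $\Omega TV$ on one side and (\ref{form:epsTOmega}) applied to the algebra $TV$ on the other, the two sides at level $n$ reduce to
\[
\delta_{n,1}\,\omega V\circ\zeta\mathbf{T}V \qquad\text{and}\qquad \omega V\circ m_{TV}^{n-1}\circ(\zeta\mathbf{T}V)^{\otimes n}
\]
respectively (with the $n=0$ interpretation of the first being the zero morphism $\mathbf{1}\to V$). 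The cases $n=0$ and $n=1$ agree at once: for $n=0$ one has $m_{TV}^{-1}=u_{TV}=\alpha_0V$, killed by $\omega V$; for $n=1$, $m_{TV}^0=\mathrm{Id}_{\Omega TV}$.

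The heart of the argument is the case $n\geq 2$, where I must show $\omega V\circ m_{TV}^{n-1}\circ(\zeta\mathbf{T}V)^{\otimes n}=0$. My plan is to identify $(\mathbf{T}V)^+$ with the coproduct $\bigoplus_{k\geq 1}V^{\otimes k}$. Since $\Omega\varepsilon_{\mathbf{T}V}$ is an algebra morphism with $\Omega\varepsilon_{\mathbf{T}V}\circ\alpha_1V=0$, it vanishes on every $\alpha_kV$ with $k\geq 1$, so each such $\alpha_kV$ lifts uniquely through the equalizer $\zeta\mathbf{T}V$ to some $\beta_kV:V^{\otimes k}\to(\mathbf{T}V)^+$, and these assemble into $\bar\beta:\bigoplus_{k\geq 1}V^{\otimes k}\to(\mathbf{T}V)^+$. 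Using preadditivity and the section $\alpha_0V$ of $\Omega\varepsilon_{\mathbf{T}V}$, the morphism $\mathrm{Id}_{\Omega TV}-\alpha_0V\circ\Omega\varepsilon_{\mathbf{T}V}$ factors through $\zeta\mathbf{T}V$ and yields a retraction $\tilde\pi$ of $\zeta\mathbf{T}V$; this produces a splitting $\Omega TV\cong\mathbf{1}\oplus(\mathbf{T}V)^+$ which, compared with $\Omega TV=\bigoplus_{k\geq 0}V^{\otimes k}$, makes $\bar\beta$ an isomorphism under which $\zeta\mathbf{T}V$ becomes the canonical inclusion. Tensoring, $((\mathbf{T}V)^+)^{\otimes n}$ decomposes as the coproduct over tuples $(k_1,\dots,k_n)$ with $k_i\geq 1$ of $V^{\otimes k_1}\otimes\cdots\otimes V^{\otimes k_n}$, and on each such summand $m_{TV}^{n-1}\circ(\zeta\mathbf{T}V)^{\otimes n}$ equals $\alpha_{k_1+\cdots+k_n}V$ by the tensor-algebra identity $m_{TV}\circ(\alpha_iV\otimes\alpha_jV)=\alpha_{i+j}V$. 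Since $k_1+\cdots+k_n\geq n\geq 2$, (\ref{form:gamalpha}) annihilates this, and the vanishing follows.

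The main obstacle I anticipate is establishing the direct-sum decomposition $\Omega TV\cong\mathbf{1}\oplus(\mathbf{T}V)^+$ with sufficient care in the abstract preadditive setting; once this identifies $(\mathbf{T}V)^+$ with the coproduct $\bigoplus_{k\geq 1}V^{\otimes k}$, the rest is routine bookkeeping with the coproduct structure and the identities (\ref{form:gamalpha}) and (\ref{form:epsTOmega}).
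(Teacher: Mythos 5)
Your proof is correct, and it coincides with the paper's up to and including the reduction to the coproduct components $\alpha _{n}\left( \mathbf{T}V\right) ^{+}$ and the cases $n=0,1$; for the decisive case $n\geq 2$, however, you take a genuinely different route. The paper never identifies $\left( \mathbf{T}V\right) ^{+}$ explicitly: it proves, degreewise on $\Omega TV$, the identity $\omega V\circ m_{\Omega TV}=r_{V}\circ \left( \omega V\otimes \varepsilon _{\Omega TV}\right) +l_{V}\circ \left( \varepsilon _{\Omega TV}\otimes \omega V\right) $, so that a single application of the multiplication already produces a factor $\varepsilon _{\Omega TV}\circ \zeta \mathbf{T}V=0$ in each resulting summand; the vanishing for $n\geq 2$ then uses nothing about $\left( \mathbf{T}V\right) ^{+}$ beyond its defining property as the kernel of $\varepsilon _{\Omega \mho \mathbf{T}V}$. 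You instead split the idempotent $\alpha _{0}V\circ \varepsilon _{\Omega TV}$ so as to identify $\zeta \mathbf{T}V$ with the canonical inclusion $\bigoplus_{k\geq 1}V^{\otimes k}\rightarrow \bigoplus_{k\geq 0}V^{\otimes k}$ (both being kernels of $\varepsilon _{\Omega TV}$) and then annihilate each summand of $\left( \left( \mathbf{T}V\right) ^{+}\right) ^{\otimes n}$ by the degree count $k_{1}+\dots +k_{n}\geq n\geq 2$ via \eqref{form:gamalpha}. Your route costs a little more: you need $\varepsilon _{\Omega TV}\circ \alpha _{k}V=0$ for all $k\geq 1$ (which does follow from multiplicativity of the counit together with $\varepsilon _{\Omega TV}\circ \alpha _{1}V=0$, i.e.\ the primitivity of $V$ in $\mathbf{T}V$), and you need the preservation of denumerable coproducts by the tensor product applied $n$ times to decompose $\left( \left( \mathbf{T}V\right) ^{+}\right) ^{\otimes n}$. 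In exchange you obtain the explicit description of $\left( \mathbf{T}V\right) ^{+}$ as the positive-degree part of the tensor algebra, which the paper's derivation-style computation deliberately avoids. Both arguments are valid under the standing hypotheses of the section.
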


\begin{proof}
We have%
\begin{equation*}
\omega V\circ \eta V=\omega V\circ \alpha _{1}V\overset{(\ref{form:gamalpha})%
}{=}\mathrm{Id}_{V}
\end{equation*}%
and hence $\omega \circ \eta =\mathrm{Id}.$ Let us check (\ref{form:omega}).
For every $V\in \mathcal{M}$ we compute%
\begin{eqnarray*}
&&\omega \omega V\circ \Omega T\zeta \mathbf{T}V\circ \alpha _{n}\left(
-\right) ^{+}\mathbf{T}V \\
&=&\omega V\circ \Omega T\omega V\circ \Omega T\zeta \mathbf{T}V\circ \alpha
_{n}\left( -\right) ^{+}\mathbf{T}V=\omega V\circ \alpha _{n}V\circ \left(
\omega V\right) ^{\otimes n}\circ \left( \zeta \mathbf{T}V\right) ^{\otimes
n} \\
&=&\delta _{n,1}\left( \omega V\right) ^{\otimes n}\circ \left( \zeta
\mathbf{T}V\right) ^{\otimes n}=\delta _{n,1}\omega V\circ \zeta \mathbf{T}V.
\end{eqnarray*}%
On the other hand%
\begin{eqnarray*}
\omega V\circ \Omega \epsilon TV\circ \Omega T\zeta \mathbf{T}V\circ \alpha
_{n}\left( -\right) ^{+}\mathbf{T}V &=&\omega V\circ \Omega \epsilon TV\circ
\alpha _{n}\Omega \mho \mathbf{T}V\circ \left( \zeta \mathbf{T}V\right)
^{\otimes n} \\
&=&\omega V\circ \Omega \epsilon TV\circ \alpha _{n}\Omega TV\circ \left(
\zeta \mathbf{T}V\right) ^{\otimes n} \\
&\overset{(\ref{form:epsTOmega})}{=}&\omega V\circ m_{\Omega TV}^{n-1}\circ
\left( \zeta \mathbf{T}V\right) ^{\otimes n}.
\end{eqnarray*}%
Hence we have to check that%
\begin{equation*}
\delta _{n,1}\omega V\circ \zeta \mathbf{T}V=\omega V\circ m_{\Omega
TV}^{n-1}\circ \left( \zeta \mathbf{T}V\right) ^{\otimes n}.
\end{equation*}%
For $n=0$ we have%
\begin{equation*}
\delta _{0,1}\omega V\circ \zeta \mathbf{T}V=0=\omega V\circ \alpha
_{0}V=\omega V\circ u_{\Omega TV}=\omega V\circ m_{\Omega TV}^{-1}\circ
\left( \zeta \mathbf{T}V\right) ^{\otimes 0}.
\end{equation*}%
For $n=1$ we have%
\begin{equation*}
\delta _{1,1}\omega V\circ \zeta \mathbf{T}V=\omega V\circ \zeta \mathbf{T}%
V=\omega V\circ m_{\Omega TV}^{0}\circ \left( \zeta \mathbf{T}V\right)
^{\otimes 1}.
\end{equation*}%
For $n\geq 2$ we have $\delta _{n,1}\omega V\circ \zeta \mathbf{T}V=0$. In
order to prove that also $\omega \circ m_{\Omega TV}^{n-1}\circ \left( \zeta
\mathbf{T}V\right) ^{\otimes n}=0$ we need first to give a different
expression for $\omega V\circ m_{\Omega TV}.$ To this aim we compute (we use
the identifications $V\otimes \mathbf{1}\cong V\cong \mathbf{1}\otimes V$)%
\begin{eqnarray*}
&&\omega V\circ m_{\Omega TV}\circ \left( \alpha _{m}V\otimes \alpha
_{n}V\right) \\
&=&\omega V\circ \alpha _{m+n}V=\delta _{m+n,1}\mathrm{Id}_{V} \\
&=&\delta _{m,1}\delta _{n,0}\mathrm{Id}_{V\otimes \mathbf{1}}+\delta
_{m,0}\delta _{n,1}\mathrm{Id}_{\mathbf{1\otimes }V} \\
&=&r_{V}\circ \left( \delta _{m,1}\mathrm{Id}_{V}\otimes \delta _{n,0}%
\mathrm{Id}_{\mathbf{1}}\right) +l_{V}\circ \left( \delta _{m,0}\mathrm{Id}_{%
\mathbf{1}}\otimes \delta _{n,1}\mathrm{Id}_{V}\right) \\
&=&r_{V}\circ \left( \omega V\otimes \varepsilon _{\Omega TV}\right) \circ
\left( \alpha _{m}V\otimes \alpha _{n}V\right) +l_{V}\circ \left(
\varepsilon _{\Omega TV}\otimes \omega V\right) \circ \left( \alpha
_{m}V\otimes \alpha _{n}V\right) \\
&=&\left( r_{V}\circ \left( \omega V\otimes \varepsilon _{\Omega TV}\right)
+l_{V}\circ \left( \varepsilon _{\Omega TV}\otimes \omega V\right) \right)
\circ \left( \alpha _{m}V\otimes \alpha _{n}V\right) .
\end{eqnarray*}%
Since the tensor products preserve denumerable coproducts, the equalities
above yield the identity%
\begin{equation*}
\omega V\circ m_{\Omega TV}=r_{V}\circ \left( \omega V\otimes \varepsilon
_{\Omega TV}\right) +l_{V}\circ \left( \varepsilon _{\Omega TV}\otimes
\omega V\right) .
\end{equation*}%
Using it, we obtain
\begin{eqnarray*}
&&\omega V\circ m_{\Omega TV}^{n-1}\circ \left( \zeta \mathbf{T}V\right)
^{\otimes n} \\
&=&\omega V\circ m_{\Omega TV}\circ \left( m_{\Omega TV}^{n-2}\otimes \Omega
TV\right) \circ \left( \zeta \mathbf{T}V\right) ^{\otimes n} \\
&=&\left( r_{V}\circ \left( \omega V\otimes \varepsilon _{\Omega TV}\right)
+l_{V}\circ \left( \varepsilon _{\Omega TV}\otimes \omega V\right) \right)
\circ \left( m_{\Omega TV}^{n-2}\otimes \Omega TV\right) \circ \left( \zeta
\mathbf{T}V\right) ^{\otimes n} \\
&=&r_{V}\circ \left( \omega V\circ m_{\Omega TV}^{n-2}\otimes \varepsilon
_{\Omega TV}\right) \circ \left( \zeta \mathbf{T}V\right) ^{\otimes
n}+l_{V}\circ \left( \varepsilon _{\Omega TV}\circ m_{\Omega
TV}^{n-2}\otimes \omega V\right) \circ \left( \zeta \mathbf{T}V\right)
^{\otimes n} \\
&=&r_{V}\circ \left( \omega V\circ m_{\Omega TV}^{n-2}\circ \left( \zeta
\mathbf{T}V\right) ^{\otimes n-1}\otimes \varepsilon _{\Omega TV}\circ \zeta
\mathbf{T}V\right) +l_{V}\circ \left( \left( \varepsilon _{\Omega TV}\circ
\zeta \mathbf{T}V\right) ^{\otimes n-1}\otimes \omega V\circ \zeta \mathbf{T}%
V\right).
\end{eqnarray*}%
The last two summands are zero as $\varepsilon _{\Omega TV}\circ \zeta \mathbf{T}V=\varepsilon _{\Omega\mho \mathbf{T}V}\circ \zeta \mathbf{T}V=0$ by definition of $\zeta$.
\end{proof}

\begin{remark}
\label{rem:Tnotheavy}As observed the comparison functor $K:\mathrm{Alg}%
\left( \mathcal{M}\right) \rightarrow \mathcal{M}_{\Omega T}$ is an
isomorphism of categories. By Corollary \ref{coro:heavsepU}, $T$ is
h-separable if and only if $\Omega :\mathrm{Alg}\left( \mathcal{M}\right)
\rightarrow \mathcal{M}$ is a split epimorphism. Let us prove, by contradiction, that this is not the case. Assume that there is a functor $\Gamma :\mathcal{M}\rightarrow
\mathrm{Alg}\left( \mathcal{M}\right) $ such that $\Omega \Gamma =\mathrm{Id}%
.$ Let $V\in \mathcal{M}$. Then $\Gamma V=\left( V,mV,uV\right) $ for some
morphisms $mV:V\otimes V\rightarrow V$ and $uV:\mathbf{1}\rightarrow V.$ Let $%
f:V\rightarrow V$ be the zero morphism. Then $\Gamma f:\Gamma V\rightarrow \Gamma
V$ is an algebra morphism and $\Omega \Gamma f=f$. Thus $f$ is unitary i.e. $%
uV=f\circ uV=0\circ uV=0$. Hence $\mathrm{Id}_{V}=mV\circ \left( V\otimes
uV\right) \circ r_{V}^{-1}=0.$ As a consequence any morphism $h:V\rightarrow
W$ would be zero as $h=h\circ \mathrm{Id}_{V}$ for every $V,W\in \mathcal{M}$%
. Hence $\mathrm{Hom}_{\mathcal{M}}\left( V,W\right) =\left\{ 0\right\} .$
This happens only if all objects are isomorphic to the unit object $\mathbf{1%
}$, i.e. if the skeleton of $\mathcal{M}$ is the trivial monoidal category $%
\left( \mathcal{T},\otimes ,\mathbf{1}\right) $, where $\mathrm{Ob}\left(
\mathcal{T}\right) =\left\{ \mathbf{1}\right\} ,$ $\mathrm{Hom}_{\mathcal{T}%
}\left( \mathbf{1,1}\right) =\left\{ \mathrm{Id}_{\mathbf{1}}\right\} $ and
the tensor product is given by $\mathbf{1\otimes 1=1}$ and $\mathrm{Id}_{%
\mathbf{1}}\otimes \mathrm{Id}_{\mathbf{1}}=\mathrm{Id}_{\mathbf{1}}.$ This
is evidently a restrictive condition on $\mathcal{M}$. Thus, in general $T:%
\mathcal{M}\rightarrow \mathrm{Alg}\left( \mathcal{M}\right) $ is not
heavily separable. On the other hand the equality $\omega \circ \eta =%
\mathrm{Id}$ obtained in Lemma \ref{lem:omega} means that the functor $T:%
\mathcal{M}\rightarrow \mathrm{Alg}\left( \mathcal{M}\right) $ is separable.

As a particular case, we get that the functor $T:\vec\rightarrow \mathrm{Alg}_{\Bbbk }$ is separable but not h-separable.
\end{remark}

\begin{theorem}
Set $\boldsymbol{\gamma} :=\omega \circ \xi \mathbf{T:PT}\rightarrow \mathrm{Id}_{%
\mathcal{M}}.$ Then $\boldsymbol{\gamma} \circ \boldsymbol{\eta }=\mathrm{Id}$ and $%
\boldsymbol{\gamma} \boldsymbol{\gamma} =\boldsymbol{\gamma} \circ \mathbf{\mathbf{P\boldsymbol{\epsilon }T.}}$
Hence the functor $\mathbf{T}:\mathcal{M}\rightarrow \mathrm{Bialg}\left(
\mathcal{M}\right) $ is h-separable.
\end{theorem}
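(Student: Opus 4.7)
My plan is to apply Theorem \ref{thm:Rafael}(i) to the adjunction $(\mathbf{T},\mathbf{P})$: once I verify the two identities $\boldsymbol{\gamma}\circ\boldsymbol{\eta}=\mathrm{Id}$ and $\boldsymbol{\gamma}\boldsymbol{\gamma}=\boldsymbol{\gamma}\circ\mathbf{P}\boldsymbol{\epsilon}\mathbf{T}$, the h-separability of $\mathbf{T}$ will follow at once. The ingredients I intend to combine are the explicit formula $\xi=\Omega\mho\boldsymbol{\epsilon}\circ\eta\mathbf{P}$ defining $\xi$, the compatibility $\epsilon\mho\circ T\xi=\mho\boldsymbol{\epsilon}$ of \eqref{form:epstilde}, the factorization $\xi=\zeta\circ\widehat{\xi}$ from Lemma \ref{lem:equifib}, and both conclusions of Lemma \ref{lem:omega}.

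For the retraction identity I would unfold $\xi\mathbf{T}\circ\boldsymbol{\eta}=\Omega\mho\boldsymbol{\epsilon}\mathbf{T}\circ\eta\mathbf{PT}\circ\boldsymbol{\eta}$, rewrite $\eta\mathbf{PT}\circ\boldsymbol{\eta}=\Omega T\boldsymbol{\eta}\circ\eta$ by naturality of $\eta$, and then collapse $\Omega\mho(\boldsymbol{\epsilon}\mathbf{T}\circ\mathbf{T}\boldsymbol{\eta})$ to $\mathrm{Id}_{\Omega T}$ via the triangular identity (recalling that $\mho\mathbf{T}=T$). What remains is $\omega\circ\eta$, which equals $\mathrm{Id}$ by Lemma \ref{lem:omega}.

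The second identity is the substantial part. The strategy is to move $\omega$ past the inner occurrence of $\xi\mathbf{T}$ and then to invoke \eqref{form:omega}. Expanding $\boldsymbol{\gamma}\boldsymbol{\gamma}V=\omega V\circ\xi\mathbf{T}V\circ\omega\mathbf{PT}V\circ\xi\mathbf{TPT}V$ and applying naturality of $\omega$ at the morphism $\xi\mathbf{T}V$ converts it to $\omega\omega\circ\Omega T(\xi\mathbf{T})\circ\xi\mathbf{TPT}$, evaluated at $V$. Since \eqref{form:omega} is phrased with $\zeta\mathbf{T}$ rather than $\xi\mathbf{T}$, I would use Lemma \ref{lem:equifib} to factor $\xi\mathbf{T}=\zeta\mathbf{T}\circ\widehat{\xi}\mathbf{T}$, apply \eqref{form:omega} to the $\zeta\mathbf{T}$ piece, and then recombine $\Omega T(\zeta\mathbf{T})\circ\Omega T(\widehat{\xi}\mathbf{T})=\Omega T(\xi\mathbf{T})$ to obtain $\omega\circ\Omega\epsilon T\circ\Omega T(\xi\mathbf{T})\circ\xi\mathbf{TPT}$. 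Applying $\Omega$ to \eqref{form:epstilde} and whiskering with $\mathbf{T}$ gives $\Omega\epsilon T\circ\Omega T(\xi\mathbf{T})=\Omega\mho\boldsymbol{\epsilon}\mathbf{T}$; finally, naturality of $\xi$ at the morphism $\boldsymbol{\epsilon}\mathbf{T}\colon\mathbf{TPT}\to\mathbf{T}$ in $\mathrm{Bialg}(\mathcal{M})$ converts $\Omega\mho\boldsymbol{\epsilon}\mathbf{T}\circ\xi\mathbf{TPT}$ into $\xi\mathbf{T}\circ\mathbf{P}\boldsymbol{\epsilon}\mathbf{T}$, yielding $\boldsymbol{\gamma}\circ\mathbf{P}\boldsymbol{\epsilon}\mathbf{T}$ as desired.

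The main obstacle is organisational rather than conceptual: one must keep track of several whiskerings and, crucially, recognize that \eqref{form:omega} is available only after factoring $\xi$ through $\zeta$; without invoking Lemma \ref{lem:equifib} the calculation does not close, because the hypothesis of Lemma \ref{lem:omega} is attached to $\zeta\mathbf{T}$ and not to $\xi\mathbf{T}$. Once this factorization is in place, the remaining rewrites are forced by the triangle identity, naturality of $\xi$, and the already-established \eqref{form:epstilde}.
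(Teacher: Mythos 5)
Your proposal is correct and follows essentially the same route as the paper: verify $\boldsymbol{\gamma}\circ\boldsymbol{\eta}=\mathrm{Id}$ via the triangle identity and $\omega\circ\eta=\mathrm{Id}$, reduce $\boldsymbol{\gamma}\boldsymbol{\gamma}$ to $\omega\omega$ precomposed with the doubled $\xi\mathbf{T}$, factor through $\zeta\mathbf{T}$ via Lemma \ref{lem:equifib} so that (\ref{form:omega}) applies, and finish with (\ref{form:epstilde}) and naturality, then conclude by Theorem \ref{thm:Rafael}(i). The only (cosmetic) difference is that you obtain the step $\Omega\epsilon\mho\mathbf{T}\circ\xi\mathbf{T}\xi\mathbf{T}=\xi\mathbf{T}\circ\mathbf{P}\boldsymbol{\epsilon}\mathbf{T}$ directly from (\ref{form:epstilde}) together with naturality of $\xi$, where the paper re-expands the definition of $\xi$ to prove the unwhiskered identity $\Omega\epsilon\mho\circ\xi\mathbf{T}\xi=\xi\circ\mathbf{P}\boldsymbol{\epsilon}$ separately.
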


\begin{proof}
We compute%
\begin{eqnarray*}
\boldsymbol{\gamma} \circ \boldsymbol{\eta } &=&\omega \circ \xi \mathbf{T}\circ
\boldsymbol{\eta }\overset{\text{def. }\xi }{=}\omega \circ \Omega \mho
\boldsymbol{\epsilon }\mathbf{T}\circ \eta \mathbf{PT}\circ \boldsymbol{\eta
} \\
&=&\omega \circ \Omega \mho \boldsymbol{\epsilon }\mathbf{T}\circ \Omega T%
\boldsymbol{\eta }\circ \eta =\omega \circ \Omega \mho \boldsymbol{\epsilon }%
\mathbf{T}\circ \Omega \mho \mathbf{T}\boldsymbol{\eta }\circ \eta =\omega
\circ \eta =\mathrm{Id}.
\end{eqnarray*}%
Moreover%
\begin{eqnarray*}
\Omega \epsilon \mho\circ \xi \mathbf{\mathbf{T}}\xi 
&=&\Omega \epsilon \mho\circ \Omega \mho \mathbf{\mathbf{T}}\xi \circ \xi \mathbf{\mathbf{TP}} \\
&\overset{\text{def.}\xi }{=}&\Omega \epsilon \mho\circ \Omega T\Omega \mho
\boldsymbol{\epsilon }\circ \Omega T\eta P\circ \Omega \mho \boldsymbol{\epsilon }\mathbf{\mathbf{TP}}\circ \eta P\mathbf{\mathbf{TP}} \\
&=&\Omega \mho \boldsymbol{\epsilon }\circ \Omega
\epsilon \mho \mathbf{\mathbf{T\mathbf{P}}}\circ \Omega T\eta P\circ \Omega \mho \boldsymbol{\epsilon }\mathbf{\mathbf{TP}}%
\circ \eta P\mathbf{\mathbf{TP}}  \\
&=&\Omega \mho \boldsymbol{\epsilon }\circ \Omega \mho
\boldsymbol{\epsilon }\mathbf{\mathbf{TP}}\circ \eta P\mathbf{\mathbf{TP}}
\\
&=&\Omega \mho \boldsymbol{\epsilon }\circ \Omega \mho
\mathbf{\mathbf{TP\boldsymbol{\epsilon }}}\circ \eta P\mathbf{\mathbf{TP}}
\\
&=&\Omega \mho \boldsymbol{\epsilon }\circ \eta P\circ \mathbf{\mathbf{P\boldsymbol{\epsilon }}}\overset{\text{%
def.}\xi }{=}\xi \circ \mathbf{\mathbf{P\boldsymbol{%
\epsilon }}}
\end{eqnarray*}%
so that%
\begin{gather*}
\boldsymbol{\gamma} \boldsymbol{\gamma} =\omega \omega \circ \xi \mathbf{T}\xi \mathbf{T}=\omega
\omega \circ \Omega \mho \mathbf{T}\zeta \mathbf{T\circ }\xi \mathbf{T}%
\widehat{\xi }\mathbf{T}=\omega \omega \circ \Omega T\zeta \mathbf{T\circ }%
\xi \mathbf{T}\widehat{\xi }\mathbf{T} \\
\overset{(\ref{form:omega})}{=}\omega \circ \Omega \epsilon T\circ \Omega
T\zeta \mathbf{T\circ }\xi \mathbf{T}\widehat{\xi }\mathbf{T}=\omega \circ
\Omega \epsilon \mho\mathbf{T}\circ \xi \mathbf{T}\xi \mathbf{T}=\omega \circ \xi \mathbf{%
\mathbf{T}}\circ \mathbf{\mathbf{P\boldsymbol{\epsilon }T}}=\boldsymbol{\gamma} \circ
\mathbf{\mathbf{P\boldsymbol{\epsilon }T}}
\end{gather*}
\end{proof}

\end{document}